\newcommand{\R}{\mathbb{R}}
\newcommand{\Q}{\mathbb{Q}}
\newcommand{\N}{\mathbb{N}}
\newtheorem{theorem}{Theorem}[section]
\newtheorem{coro}{Corollary}[theorem]
\newtheorem{remark}{Remark}[section]
\newtheorem{lemma}{Lemma}[section]
\newtheorem{proposition}{Proposition}[section]
\newtheorem{cor}{Corollary}[proposition]
\newtheorem{example}{Example}[section]
\title{Existence results for equilibrium problem }
\author{
John Cotrina\thanks{Universidad del Pac\'ifico.
Av. Salaverry 2020, Jes\'us Mar\'ia, Lima, Per\'u. Email: \texttt{\{cotrina\_je,garcia\_yv\}@up.edu.pe}
}\and Yboon Garc\'ia\footnotemark[1]}
\begin{document}

\maketitle
\begin{abstract}
In this work, we introduce the notion of regularization of bifunctions in a similar way 
as the well-known convex, quasiconvex and lower semicontinuous regularizations due to Crouzeix.
We show that the Equilibrium Problems associated to bifunctions and their regularizations  are equivalent
in the sense of having the same solution set. Also, we present
new existence results of solutions for Equilibrium Problems.

\end{abstract}
\bigskip

\noindent{\bf Keywords:} Equilibrium Problems, Convex Feasibility problems, Monotonicity generalized, Convexity generalized, 
Coercivity conditions, Upper sign property.

\bigskip

\noindent{\bf MSC (2000):} 47J20, 49J35, 54C60, 90C37

\section{Introduction}
Given a real Banach space $X$, a nonempty subset $K$ of $X$ and a bifunction
$f:K\times K\to\R$. The \emph{Equilibrium Problem}, (EP) for short,  is defined as follows:
\begin{equation}\label{EP}
\hbox{Find } x\in K \hbox{ such that }  f(x,y)\geq0\mbox{ for all }y\in K.\tag{EP}
\end{equation}

Equilibrium Problems have been extensively studied in recent years 
(e.g., \cite{BP01,BP05,OB93,castellani2010,castellani2012,FB00,FB03,IKS06,IKS09,IS03}).
Particularly, It is well known that many problems such as
variational inequality problems, fixed-point problems, Nash equilibrium problems and optimization problems, 
among others, can be reformulated as equilibrium problems.
(see for instance \cite{OB93,IS03,SN10,Fa-Za}).

A recurrent subject  in the analysis of this problem is  the connection between the solution sets of (EP) 
and the solution set of the following problem:

\begin{equation}\label{cfp}
\mbox{Find }x\in K\mbox{ such that }f(y,x)\leq 0\mbox{ for all }y\in K. \tag{CFP}
\end{equation}
This can be seen as a dual formulation of \eqref{EP} and it corresponds to a particular case of the convex 
feasibility problem (cfr. \cite{KKM,Kfan2}).


It was proved in \cite{IS03} that if $f$ is {upper} semicontinuous in the first argument, convex 
{and lower semicontinuous} in the second one and it vanishes
on the diagonal $K\times K$, then every solution of (\ref{cfp}) is a solution of 
(\ref{EP}), and moreover both solution sets
trivially coincide under pseudomonotonicity of $f$.

In order to establish the nonemptiness of the solution set of (\ref{cfp}) and the inclusion of this set in solution set of (\ref{EP}) 
in \cite{BP05}, Bianchi and Pini introduced the concept of \emph{local convex feasibility problem} and the 
\emph{upper sign continuity} for bifunctions as an adaptation
of the set-valued map introduced in \cite{H03},  by Hadjisavvas. 
They adaptated the existence result for 
variational inequalities developed by Aussel and Hadjisavvas in \cite{AH04}. Basically, they proved 
that every solution of (\ref{cfp}) is a local solution of (\ref{cfp}) and all local solution of (\ref{CFP}) is a solution of (\ref{EP}).
Following the same way, in \cite{castellani2012}, 
{Castellani} and Giuli introduced the concept of \emph{upper sign property} for bifunction as a
local property which is weaker than the upper sign continuity and they extend the result obtained by Bianchi and Pini. 


Our aim in this paper is to provide sufficient conditions for the existence of solutions under weak assumptions on
the bifunction and some coercivity conditions. We introduce, in Section 3, the regularization of a bifunction analogously of regularization 
of functions introduced in \cite{JPC} by Crouzeix
and we study the properties of such regularization. In section 4, we establish that
the equilibrium problems associated to a bifunction and its regularization are equivalent in the sense 
{Castellani} and Giuli. (cf. \cite{castellani2010}). 
We provide, in  Section 5, sufficient conditions for 
the existence of solutions for (EP). 


\section{Preliminary definitions and notations}
Let $X$ be a real topological vector space, and let $A\subset X$. We denote by
$\overline{A}$, ${\rm co}(A)$ and $\overline{{\rm co}}(A)$ the smallest closed set, convex set and 
closed convex set (in the sense of inclusion), respectively, which contains $A$.
These sets are called the \emph{closure}, \emph{convex hull} and the \emph{closed convex hull}, respectively.
Given $h:X\to \overline{\R}$, where $\overline{\R}=[-\infty,+\infty]$\footnote{As is usual in convex analysis, we consider functions defined on the whole 
space; if it is not the case for some function $h$, we set $h(x)=+\infty$ for $x$ not in the domain of $h$}, we consider the following sets:
\begin{itemize}
 \item $\operatorname{dom}(h)=\{x\in X:~h(x)<+\infty\}$;
 \item $\operatorname{epi}(h)=\{(x,\lambda)\in X\times\R:~h(x)\leq\lambda\}$;
 \item for each $\lambda\in\R$, $S_{\lambda}(h)=\{x\in X:~h(x)\leq\lambda\}$.
\end{itemize}
The sets ${\rm dom}(h),~{\rm epi}(h)$ and $S_\lambda(h)$ are called the \emph{domain}, the \emph{epigraph} and the \emph{lower level set}
of $h$ with respect to $\lambda$, respectively.\\
\newline
Considering the convention $+\infty-\infty=-\infty+\infty=+\infty$, recall that a function $h:X\rightarrow\overline{\R}$ is said to be:
\begin{itemize}
\item \emph{convex} if, for all $x,y\in X$ and all $t\in[0,1]$,
$ h(x_t)\leq th(x)+(1-t)h(y)$,

\item \emph{quasiconvex} if, for all $x,y\in X$ and all $t\in\left[0,1\right]$,
$h(x_t)\leq\max\left\{h(x),h(y)\right\}$,

\item \emph{semistrictly quasiconvex}  if $h$ is quasiconvex and for all $x,y\in X$
\[
h(x)<h(y)\Rightarrow h(x_t)<h(y),~\forall t\in \left[0,1\right[,
\]
\end{itemize}
where $x_t=tx+(1-t)y$. It is clear that a convex function is quasiconvex and that the domain of a quasiconvex function is convex. 

We recall that $h$ is said to be \emph{lower semicontinuous} (in short lsc) at $x_0\in X$ if for all 
$\lambda<h(x_0)$, there exists a neighborhood $V$ of $x_0$ such that for all $x\in V$, it holds that $h(x)>\lambda$. Also,
$h$ is said to be lower semicontinuous if it is lower semicontinuous at any $x_0\in X$. 
A function $h$ is said to be \emph{upper semicontinuous} if $-h$ is lower semicontinuous. \\
\newline
Crouzeix defined in \cite{JPC} the regularizations of a function $h:X\to\overline{\R}$ as:
\begin{itemize}
\item $h_s(x)=\inf\{\lambda\in\R:~(x,\lambda)\in \overline{{\rm epi}(h)}\}$,
\item $h_c(x)=\inf\{\lambda\in\R:~(x,\lambda)\in{\rm co}({\rm epi}(h))\}$,
\item $h_{\overline{c}}(x)=\inf\{\lambda\in\R:~(x,\lambda)\in \overline{{\rm co}}({\rm epi}(h))\}$,
\item $h_q(x)=\inf\{\lambda\in\R:~x\in {\rm co}(S_{\lambda}(h))\}$ and
\item $h_{\overline{q}}(x)=\inf\{\lambda\in\R:~ x\in \overline{{\rm co}}(S_{\lambda}(h))\}$.
\end{itemize}
It results that $h_s, h_c, h_{\overline{c}}, h_q$ and $h_{\overline{q}}$ are the greatest lsc function (\emph{lsc regularization}), 
the greatest convex function (\emph{convex regularization}), the 
greatest lsc convex function ( \emph{lsc convex regularization}), 
the greatest quasiconvex function (\emph{quasiconvex regularization}) and 
the greatest lsc quasiconvex function (\emph{lsc quasiconvex regularization}) which are majorized by $h$, respectively.
It is clear that ${\rm epi}(h_s)=\overline{{\rm epi}(h)}$, ${\rm epi}(h_{\overline{c}})=\overline{{\rm co}}({\rm epi}(f))$,
${\rm epi}(h_{\overline{q}})=\overline{{\rm epi}(h_q)}$ and
\[
h_{\overline{c}} \leq h_{\overline{q}}\leq h_{q}\leq h.
\]
We say that a regularization $h_i$ of $h$ is \emph{well defined} when $h_i(x)\in\R$ for all $x\in{\rm dom}(h)$, where $i\in\{\overline{c},c,\overline{q},q,s\}$.\\
\newline
We recall some  different definitions of \emph{generalized monotonicity} (the ones we will be use from now on) for some bifunction $f:X\times X\rightarrow \R$:
\begin{itemize}
\item \emph{Quasimonotone} if, for all $x,y\in X$,
$f(x,y)>0\Rightarrow f(y,x)\leq0$.

\item \emph{Properly quasimonotone} if, for all $x_1,x_2,\dots, x_n\in X$, and all 
$x\in {\rm co}(\{ x_1,x_2,\dots,x_n\} )$, there exists $i\in \{ 1,2,\dots,n\}$ 
such that $f(x_i,x)\leq0$.
\item \emph{Pseudomonotone} if, for all $x,y\in X$,
$f(x,y)\geq0\Rightarrow f(y,x)\leq0$.
\item\emph{Monotone} if, for all $x,y\in X$, $f(x,y)+f(y,x)\leq0$.

\end{itemize}

Clearly, monotonicity implies pseudomonotonicity and this in turn implies quasimonotonicity. 
Nevertheless no relationship exists between quasimonotonicity and proper quasimonotonicity
of bifunctions (e.g. \cite{BP01}). On the other hand, all the bifunctions $f$ satisfying 
some property of generalized monotonicity mentioned above satisfy $ f(x,x)\leq 0$ \ for all $x\in X$.

Let $K$ be a convex subset of $X$. A bifunction $f:K\times K\to\R$ is said to have  the 
\begin{itemize}
\item \emph{local upper sign property} at $x\in K$ if there exists $r>0$ such that for every $y\in K\cap B(x,r)$ 
the following implication holds:
\begin{eqnarray}\label{upperlocal}
\bigl(
f(x_t,x)\leq0,~
\forall~ t\in\,]0,1[~
\bigr)
\Rightarrow ~ f(x,y) \geq0,
\end{eqnarray}

\item \emph{upper sign property} at $x\in K$ if for every $y\in K$ 
the following implication holds:
\begin{eqnarray}\label{upper}
\bigl(
f(x_t,x)\leq0,~
\forall~ t\in\,]0,1[~
\bigr)
\Rightarrow ~ f(x,y) \geq0,
\end{eqnarray}

\end{itemize}
where $x_t=(1-t)x+ty$.\\
\newline
For example, any positive bifunction has the upper sign property.
Addionally, any bifunction such that 
$f(x,x)\geq0$, $f(\cdot,y)$ is upper semicontinuous and $f(x,\cdot)$ is semistrictly quasiconvex, for all $x,y\in K$, has the upper sign property.
Clearly, every bifunction with the upper sign property has the local upper sign property.
Moreover, in \cite{ACI}, Aussel \emph{et al.} showed that these concepts are equivalent under the following condition:
\begin{equation}\label{beta}
f(x,y)<0 \mbox{ and }f(x,x)=0~\Rightarrow~f(x,x_t)<0~\forall t\in]0,1[,
\end{equation}
where $x_t=tx+(1-t)y$. In particular, this holds when $f(x,\cdot)$ is a semistrictly quasiconvex bifunction.

\section{Regularization of a bifunction}
From now on, $X$ stands for a real Banach space and $f:K\times K\to \R$ for a bifunction defined on a nonempty and closed convex subset $K$ of
$X$. For each $x\in K$,
we denote by $f_s(x,\cdot),~f_c(x,\cdot),~f_{\overline{c}}(x,\cdot),~f_q(x,\cdot)$  and
$f_{\overline{q}}(x,\cdot)$ the lower semicontinuous, convex, convex and lower semicontinuous, quasiconvex and
quasiconvex and lower semicontinuous reguralizations of the function $f(x,\cdot)$, respectively.

Clearly, for every $x,y\in K$ holds that:
\begin{eqnarray}\label{desi1}
f_c(x,y)\leq f_q(x,y)\leq f(x,y)
~\mbox{ and }~
f_{\overline{c}}(x,y)\leq  f_{\overline{q}}(x,y)\leq f_s(x,y)\leq f(x,y).
\end{eqnarray}
In general, $f_i(x,y)$ can be $-\infty$, where $i\in\{s,c,q,\overline{c},\overline{q}\}$.\\
\newline
We define the following families of bifunctions depending on $K$:
\begin{itemize}
 \item {$\mathcal{C}(K)=\{f:K\times K\to\R:~f_{c}(x,\cdot)\mbox{ is well defined for all }x\in K\}$.}
 \item {$\mathcal{Q}(K)=\{f:K\times K\to\R:~f_{q}(x,\cdot)\mbox{ is well defined for all }x\in K\}$.}
 \item ${\overline{\mathcal{C}}(K)}=\{f:K\times K\to\R:~f_{\overline{c}}(x,\cdot)\mbox{ is well defined for all }x\in K\}$.
 \item ${\overline{\mathcal{Q}}(K)}=\{f:K\times K\to\R:~f_{\overline{q}}(x,\cdot)\mbox{ is well defined for all }x\in K\}$.
 \item $\mathcal{S}(K)=\{f:K\times K\to\R:~f_s(x,\cdot)\mbox{ is well defined for all }x\in K\}$. 
\end{itemize}
It is clear from (\ref{desi1}) that:
\begin{equation}\label{inclusions-K}
{\mathcal{C}(K)\subset \mathcal{Q}(K)~\mbox{ and }}~\overline{\mathcal{C}}(K)\subset \overline{\mathcal{Q}}(K)\subset \mathcal{S}(K).
\end{equation}
The following example shows that the previous inclusions are strict in general.
\begin{example}
Let $K=\R$ and let $f_1,f_2:\R\times\R\to\R$ two bifunctions defined as
$f_1(x,y)=y^3-x$ for all $(x,y)\in\R^2$ and 
\[
 f_2(x,y)=\left\lbrace\begin{array}{cc}
                     -\ln(|y|),& y\neq0,\\
                     0,&y=0.
                    \end{array}
\right. 
\]
For each $x\in\R$ we have the following graphs:
\begin{center}
\begin{tikzpicture}[scale=0.57]
\draw[->](-2,0)--(2.5,0)node[below right]{$y$}; 
\draw[->](0,-4)--(0,5.2)node[left]{$\R$};
\draw[domain=-2:2,<->]plot(\x,{0.5*(\x)^3+0.5});
\draw(2.5,-2)node[]{${\rm graph}\big(f_1(x,\cdot)\big)$};
\draw(0,0.7)node[left]{$-x$};
\end{tikzpicture}
\quad\quad\quad\quad
\begin{tikzpicture}[scale=1]
\draw[->](-2,0)--(2.2,0)node[below right]{$y$}; 
\draw[->](0,-2)--(0,3.2)node[left]{$\R$};
\draw[domain=-2:-0.1,<->]plot(\x,{-ln(-\x)});
\draw[domain=0.1:2,<->]plot(\x,{-ln(\x)});
\fill(0,0)circle(2pt);
\draw(1.8,-1.5)node[]{${\rm graph}\big( f_2(x,\cdot)\big)$};
\end{tikzpicture}
\end{center}
Clearly, $f_1\in\mathcal{Q}(K)\setminus\mathcal{C}(K)$ and $f_2\in \mathcal{S}(K)\setminus \overline{\mathcal{Q}}(K)$.
\end{example}
\noindent The following result shows that under compactness of $K$ the three families are the same.
\begin{proposition}
Let $K\subset X$ be a nonempty and compact convex set. Then 
\[
\overline{\mathcal{C}}(K)=\overline{\mathcal{Q}}(K)=\mathcal{S}(K).
\]
\end{proposition}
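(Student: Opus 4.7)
The inclusions $\overline{\mathcal{C}}(K)\subset\overline{\mathcal{Q}}(K)\subset\mathcal{S}(K)$ are already recorded in (\ref{inclusions-K}), so the only content of the statement is the reverse inclusion $\mathcal{S}(K)\subset\overline{\mathcal{C}}(K)$. My plan is to extract, from the finiteness of $f_s(x,\cdot)$ on the compact set $K$, a \emph{constant} lower bound for $f(x,\cdot)$ valid on all of $X$; that constant is then automatically a lsc convex minorant of $f(x,\cdot)$, which is enough to force $f_{\overline{c}}(x,\cdot)$ to be finite on $K$.

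In more detail, I would fix $f\in\mathcal{S}(K)$ and $x\in K$, regarding $f(x,\cdot)$ as extended by $+\infty$ off $K$. Since $K$ is closed, the identity $\operatorname{epi}(f_s(x,\cdot))=\overline{\operatorname{epi}(f(x,\cdot))}$ recalled in Section 2 forces $\operatorname{epi}(f_s(x,\cdot))\subset K\times\R$, hence $f_s(x,\cdot)=+\infty$ outside $K$. On $K$ itself $f_s(x,\cdot)$ is real-valued (by the hypothesis $f\in\mathcal{S}(K)$) and lsc, so by compactness of $K$ it attains a minimum $m\in\R$. Combining this with $f_s=+\infty$ off $K$ gives $f_s(x,\cdot)\geq m$ on all of $X$, and therefore $f(x,\cdot)\geq f_s(x,\cdot)\geq m$ globally.

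The constant function $y\mapsto m$ is trivially lsc and convex, and minorizes $f(x,\cdot)$ on $X$. Since $f_{\overline{c}}(x,\cdot)$ is the \emph{greatest} such minorant (Crouzeix), one concludes $f_{\overline{c}}(x,\cdot)\geq m>-\infty$ everywhere. Coupling this with the general bound $f_{\overline{c}}(x,y)\leq f(x,y)<+\infty$ for $y\in K$ shows that $f_{\overline{c}}(x,\cdot)$ is real-valued on $K$, i.e.\ $f\in\overline{\mathcal{C}}(K)$, completing the argument.

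The step I expect to require the most care is the containment $\operatorname{dom}(f_s(x,\cdot))\subset K$: this is what lets me turn the pointwise hypothesis $f\in\mathcal{S}(K)$ into a uniform lower bound on $X$ via compactness, and it relies essentially on $K$ being closed (the standing hypothesis of Section 3). Once it is in place, compactness does the remaining work for free, and no finer information about $\overline{\operatorname{co}}(\operatorname{epi}(f(x,\cdot)))$ is needed.
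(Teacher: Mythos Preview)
Your proof is correct and follows the same route as the paper: both reduce to $\mathcal{S}(K)\subset\overline{\mathcal{C}}(K)$, use compactness of $K$ and lower semicontinuity of $f_s(x,\cdot)$ to obtain a finite lower bound $m=f_s(x,x_0)$ for $f(x,\cdot)$, and conclude that $f_{\overline{c}}(x,\cdot)\geq m$. The only cosmetic difference is that the paper phrases this last step as the epigraph containment $\operatorname{epi}(f(x,\cdot))\subset K\times[m,+\infty[$ (a closed convex set, hence containing $\operatorname{epi}(f_{\overline{c}}(x,\cdot))$), whereas you invoke the equivalent ``greatest lsc convex minorant'' description of $f_{\overline{c}}$.
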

\begin{proof}
In view of the inclusions (\ref{inclusions-K}), it is enough to show that $\mathcal{S}(K)\subset \overline{\mathcal{C}}(K)$.
Let $f\in\mathcal{S}(K)$ and let $x\in K$. In view of the compactness of $K$ and the lower semicontinuity of $f_s(x,\cdot)$,
there exists $x_0\in K$ such that $f(x,y)\geq f_{s}(x,x_0)\mbox{ for all }y\in K$ and
consequently 
\[
{\rm epi}(f(x,\cdot))\subset K\times[f_{s}(x,x_0),+\infty[.
\]
It follows that ${\rm epi}(f_{\overline{c}}(x,\cdot))\subset K\times[f_{s}(x,x_0),+\infty[$, concluding that $f\in\overline{\mathcal{C}}(K)$.
\end{proof}
Castellani \emph{et al.} \cite{castellani2010}
considered the family of bifunctions $f$ such that $f(x,x)=0$ for all $x\in K$ and
satisfying the following condition:
\begin{eqnarray}\label{fam-convex}
\forall x\in K,~\exists x^*\in X^*,~\exists a\in\R: \forall y\in K,~\langle x^*,y\rangle+a\leq f(x,y),
\end{eqnarray}
in a finite dimensional space. The following result shows that the family ${\overline{\mathcal{C}}(K)}$ 
is also characterized by the condition (\ref{fam-convex}) in an infinite dimensional space.
\begin{proposition}
The family ${\overline{\mathcal{C}}(K)}$ is the set of bifunctions $f$ satisfying the condition (\ref{fam-convex}).
\end{proposition}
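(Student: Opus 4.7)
The plan is to prove both inclusions, extending $f(x,\cdot)$ by $+\infty$ off $K$ so that $f_{\overline{c}}(x,\cdot)$ can be treated as the lsc convex envelope of a function on all of $X$. Because $K$ is closed and convex, the epigraph of $f(x,\cdot)$ is contained in $K\times\R$ and so is its closed convex hull, hence $\operatorname{dom}(f_{\overline{c}}(x,\cdot))\subset K$. The decisive tool is the classical consequence of Hahn--Banach: any proper lower semicontinuous convex function on a Banach space admits a continuous affine minorant.

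For the easy direction, suppose $f$ satisfies \eqref{fam-convex}. Fix $x\in K$, pick the corresponding $x^*\in X^*$ and $a\in\R$, and consider $g_x(y)=\la x^*,y\ra+a$. This $g_x$ is continuous, convex and lsc on $X$, with $g_x\leq f(x,\cdot)$ on $K$ by hypothesis and $g_x\leq +\infty=f(x,\cdot)$ off $K$. Since $f_{\overline{c}}(x,\cdot)$ is the greatest lsc convex minorant of $f(x,\cdot)$, we get $f_{\overline{c}}(x,y)\geq g_x(y)>-\infty$ for every $y\in X$. Together with $f_{\overline{c}}(x,y)\leq f(x,y)\in\R$ for $y\in K$, this shows $f\in\overline{\mathcal{C}}(K)$.

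For the converse, assume $f\in\overline{\mathcal{C}}(K)$ and fix $x\in K$. Then $f_{\overline{c}}(x,\cdot)$ is an lsc convex function on $X$; it is nowhere $-\infty$ on $K$ by the well-definedness assumption and identically $+\infty$ off $K$, so it never takes the value $-\infty$ anywhere. Its effective domain contains $x$, because $f_{\overline{c}}(x,x)\leq f(x,x)\in\R$. Hence $f_{\overline{c}}(x,\cdot)$ is a proper lsc convex function, and the Hahn--Banach theorem furnishes $x^*\in X^*$ and $a\in\R$ with $\la x^*,y\ra+a\leq f_{\overline{c}}(x,y)$ for every $y\in X$. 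Restricting to $y\in K$ and chaining with $f_{\overline{c}}(x,\cdot)\leq f(x,\cdot)$ yields \eqref{fam-convex}.

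The only non-routine step is checking that $f_{\overline{c}}(x,\cdot)$ is genuinely proper in the converse direction (so that Hahn--Banach applies), and this is exactly what the membership in $\overline{\mathcal{C}}(K)$ and the closedness of $K$ buy us; once that is in hand, the existence of the continuous affine minorant is standard.
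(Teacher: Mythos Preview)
Your proof is correct and follows essentially the same route as the paper's: both use Hahn--Banach to produce the affine minorant in the direction $\overline{\mathcal{C}}(K)\Rightarrow$\eqref{fam-convex} and observe that an affine function is lsc convex for the converse. The only cosmetic difference is that the paper carries out the separation of a point from $\operatorname{epi}(f_{\overline{c}}(x,\cdot))$ explicitly, while you invoke the packaged consequence ``a proper lsc convex function admits a continuous affine minorant''; your handling of properness (extending by $+\infty$ off $K$ and using closedness and convexity of $K$ to keep the domain inside $K$) is a bit more careful than the paper's ``without loss of generality'' reduction.
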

\begin{proof}
Let $f\in{\overline{\mathcal{C}}(K)}$ and let $x\in K$.
Without loss of generality we can assume that $f(x,\cdot)$ is 
a convex and lower semicontinuous function, for all $x\in K$.
From \cite[Theorem I.7]{Brezis} we have that for each $(x_0,\lambda)\in K\times\R\setminus {\rm epi}(f(x,\cdot))$
there exists $(x_0^*,\lambda^*)\in X^*\times \R$  such that 
\begin{equation}\label{separation}
\langle x_0^*,x_0\rangle+\lambda^*\lambda< \langle x_0^*,y\rangle+ \lambda^*f(x,y)\mbox{ for all }y\in K. 
\end{equation}
By substituting $y=x_0$ into (\ref{separation}) we obtain $\lambda^*>0$. Thus,
\[
f(x,y)\geq \left\langle -\frac{x_0^*}{\lambda^*},y\right\rangle +\lambda+\left\langle \frac{x_0^*}{\lambda^*},x_0\right\rangle
\mbox{ for all }y\in K. 
\]
Therefore, the bifunction $f$ satisfies the condition (\ref{fam-convex}) with $x^*=-\displaystyle\frac{x_0^*}{\lambda^*}$
and $a=\lambda+\displaystyle\left\langle\frac{x_0^*}{\lambda^*},x_0\right\rangle$.\\
\newline
Conversely, let $f:K\times K\to\R$ be a bifunction satisfying (\ref{fam-convex}).
In view of the convexity and lower semicontinuity of
the function $h_x:K\to\R$, defined as $h_x(y)=\langle x^*,y\rangle+a$,
the bifunction $f\in{\overline{\mathcal{C}}(K)}$.
\end{proof}


It is natural to ask whether some kind generalized {monotonicity} of a bifunction is shared with its {regularizations}. 
The following {lemma} is a key step towards this result.

\begin{lemma}\label{prop-mono-upper}
Let  $f,g: K\times K\to\R$ be two bifunctions
such that 
\begin{align} \label{gleqf}
 g(x,y)\leq f(x,y) \mbox{ for all } x,y\in K.
\end{align}
If $f$ is either {monotone}, pseudomonotone, quasimonotone or properly quasimonotone bifunction, then $g$ is a bifunction of the same type of monotonicity.
\end{lemma}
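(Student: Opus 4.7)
The plan is to verify the four monotonicity conditions one by one, using only the pointwise bound $g \leq f$. The unifying idea is that each of these generalized monotonicity notions has the form: some hypothesis of the type $f(\cdot,\cdot) \geq 0$ or $> 0$ implies a conclusion of the type $f(\cdot,\cdot) \leq 0$. Since $g \leq f$, hypotheses on $g$ of the ``$\geq 0$'' form lift to the same hypothesis on $f$, while conclusions on $f$ of the ``$\leq 0$'' form descend to the same conclusion on $g$. So all four cases collapse to a short chain of inequalities.

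Concretely: for the \emph{monotone} case, I would just write $g(x,y)+g(y,x)\leq f(x,y)+f(y,x)\leq 0$. For the \emph{pseudomonotone} case, assume $g(x,y) \geq 0$; then $f(x,y) \geq g(x,y) \geq 0$, so by pseudomonotonicity of $f$ we get $f(y,x) \leq 0$, and hence $g(y,x) \leq f(y,x) \leq 0$. The \emph{quasimonotone} case is identical with strict inequalities in the hypothesis. For the \emph{properly quasimonotone} case, given $x_1, \ldots, x_n \in K$ and $x \in \mathrm{co}(\{x_1,\ldots,x_n\})$, proper quasimonotonicity of $f$ provides an index $i$ with $f(x_i,x) \leq 0$, and the same index works for $g$ since $g(x_i,x) \leq f(x_i,x) \leq 0$.

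There is essentially no obstacle here; the only thing to be careful about is the direction of the inequality $g \leq f$ matching the one-sided form of each definition. A potential subtlety would arise if any of the definitions involved a two-sided inequality or an equality (for instance, something like ``$f(x,y) = 0$ implies \ldots''), but none of the four notions listed in Section~2 has that form, so the argument goes through uniformly. I would present the proof as four short bullet-style verifications, possibly merging the pseudomonotone and quasimonotone cases since they differ only by strict versus non-strict inequality.
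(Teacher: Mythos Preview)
Your proposal is correct and follows essentially the same approach as the paper's own proof: both arguments reduce each of the four monotonicity notions to a direct application of the pointwise bound $g\leq f$, and both handle the properly quasimonotone case by locating an index $i$ with $f(x_i,x)\leq 0$ and concluding $g(x_i,x)\leq f(x_i,x)\leq 0$. The paper compresses the monotone, pseudomonotone and quasimonotone cases into a single sentence, while you spell them out, but there is no substantive difference.
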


\begin{proof}
In the case $f$ is {monotone} [respectively {pseudomonotone or} quasimonotone], the inequalities
\begin{equation*}
  g(x,y)\leq f(x,y)\mbox{ and } g(y,x)\leq f(y,x)\mbox{ for all } x,y\in K
\end{equation*}
imply the {motonicity} [respectively {pseudomonotonicity or} quasimonotonicity] of $g$. 

Now, assume that $f$ is properly quasimonotone. Let  $x_1,x_2,\dots,x_m\in K$ and let $x\in{\rm co}(x_1,x_2,\dots,x_m)$. Then, there exists $j_0\in\{1,2,\dots,m\}$
such that 
\[
f(x_{j_0},x)=\min_{j\in\{1,2,\dots,m\}} f(x_j,x) \leq 0,
\]
and consequently {by \eqref{gleqf}}
$$\displaystyle\min_{j\in\{1,2,\dots,m\}} g(x_j,x)\leq g(x_{j_0},x)\leq f(x_{j_0,x})\leq0,$$
which shows that $g$ is a properly quasimonotone bifunction.
\end{proof}

Now, as a direct consequence of inequalities (\ref{desi1}) and Proposition \ref{prop-mono-upper} we have the following corollary.

\begin{theorem}\label{preserva-mono}
If a bifunction is either monotone, {pseudomonotone}, quasimonotone or properly quasimonotone, then all its regularizations have the same type of generalized monotonicity.
\end{theorem}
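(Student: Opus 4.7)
The plan is to observe that this statement is essentially a packaging of Lemma~\ref{prop-mono-upper} together with the chain of inequalities~(\ref{desi1}). For each $i\in\{s,c,\overline{c},q,\overline{q}\}$ the displayed inequalities give $f_i(x,y)\le f(x,y)$ pointwise on $K\times K$, so the hypothesis of Lemma~\ref{prop-mono-upper} is satisfied with $g=f_i$. A single invocation of that lemma then transfers monotonicity, pseudomonotonicity, quasimonotonicity, or proper quasimonotonicity from $f$ to each $f_i$. The body of the proof is therefore essentially a list of five applications, one per regularization.

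One subtlety deserves a sentence, since the text acknowledges that $f_i(x,y)$ may a priori equal $-\infty$: the four generalized monotonicity notions of Section~2 extend unambiguously to bifunctions taking values in $[-\infty,+\infty[$, because each condition only asserts a one-sided inequality of the form ``$\le 0$'' on the right-hand side (and the sum appearing in the definition of monotonicity is $-\infty\le 0$ whenever either summand is $-\infty$). Under this reading, the inequality $g\le f$ used inside the proof of Lemma~\ref{prop-mono-upper} still yields the desired conclusion on $g$ without change.

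The main obstacle, such as it is, is thus not analytic but purely bookkeeping: verifying that the proof of Lemma~\ref{prop-mono-upper} really goes through verbatim when $g=f_i$ is allowed to be extended-real-valued. A quick trace shows that in each of the four cases only the one-sided inequality $g(x,y)\le f(x,y)$ (and, when needed, $g(y,x)\le f(y,x)$) is invoked, and no finiteness of $g$ is used. Consequently, the theorem is obtained as a direct corollary, and no new ingredient beyond (\ref{desi1}) and Lemma~\ref{prop-mono-upper} is required.
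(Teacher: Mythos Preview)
Your proposal is correct and follows exactly the paper's approach: the paper simply records the theorem as a direct consequence of inequalities~(\ref{desi1}) and Lemma~\ref{prop-mono-upper}, without giving a separate proof. Your additional remark about the extended-real-valued case is a point of care that the paper does not make explicit, but it does not alter the argument.
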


\begin{remark}
The converse of the last result is not true in general. We consider for instance the bifunction
$f:K\times K\to\R$ defined as:
\[
 f(x,y)=\left\lbrace\begin{matrix}
                     1,&(x,y)=(1,0)~\vee~(x,y)=(0,1)\\
                     0,& (x,y)\neq(1,0)~\wedge~(x,y)\neq(0,1)
                    \end{matrix}\right.
\]
where $K=[0,1]$. The following pictures represent the graphs of the functions  $f(x,\cdot)$:
\begin{center}
\begin{tikzpicture}[scale=1.8]
\draw[->](0,0)--(1.5,0)node[below right]{$y$}; 
\draw[->](0,0)--(0,1.5)node[left]{$\R$};
\draw[very thick](0,0)--(1,0);
\fill[white](1,0)circle(1.3pt);
\draw(1,0)circle(1.3pt);
\fill(1,1)circle(1.3pt);
\draw[dashed](1,0.1)--(1,1);
\draw[dashed](0,1)--(1,1);
\draw(1,0)node[below]{$1$};
\draw(0,1)node[left]{$1$};
\draw(0.8,-0.35)node[below]{$x=0$};
\fill(0,0)circle(1.5pt);
\end{tikzpicture}
\quad\quad
\begin{tikzpicture}[scale=1.8]
\draw[->](0,0)--(1.5,0)node[below right]{$y$}; 
\draw[->](0,0)--(0,1.5)node[left]{$\R$};
\draw[very thick](0,0)--(1,0);
\fill(1,0)circle(1.3pt);
\fill(0,0)circle(1.3pt);
\draw(1,0)node[below]{$1$};
\draw(0,1)node[left]{$1$};
\draw(0.8,-0.35)node[below]{$0<x<1$};
\end{tikzpicture}
\quad\quad
\begin{tikzpicture}[scale=1.8]
\draw[->](0,0)--(1.5,0)node[below right]{$y$}; 
\draw[->](0,0)--(0,1.5)node[left]{$\R$};
\draw(0.8,-0.35)node[below]{$x=1$};
\draw[very thick](0,0)--(1,0);
\fill[white](0,0)circle(1.3pt);
\draw(0,0)circle(1.3pt);
\fill(0,1)circle(1.3pt);
\fill(1,0)circle(1.3pt);
\draw(1,0)node[below]{$1$};
\draw(0,1)node[left]{$1$};
\end{tikzpicture}
\end{center}
The bifunction $f$ is not monotone, because $f(1,0)+f(0,1)=2 $. 
On the other hand, for all $ x,y\in K$ and for all $i\in\{\overline{c},\overline{q},s\}$
we have $f_i(x,y)=0$. Therefore $f_i$ is mononote for all $i\in\{\overline{c},\overline{q},s\}$.
\end{remark}

The following example shows that some bifunction can have the upper sign property without none of its regularizations having it.

\begin{example}\label{exam-upper-quasi}
Let $K=[0,1]$ and let $f:K\times K\to\R$ be a bifunction defined as 
\[
 f(x,y)=\left\lbrace\begin{array}{cl}
                     0,& x,y\in K~\wedge~[\{x,y\}\subset\Q~\vee~\{x,y\}\cap\Q=\emptyset]\\
                     1,&y\in K\setminus\Q~\wedge~x\in\Q\cap K\\
                     -1,&x\in K\setminus\Q~\wedge~y\in\Q\cap K,
                    \end{array}\right.
\]
where $\Q$ is the set of rational numbers. Let $x,y\in K$ such that
\begin{equation}\label{upper-sign-example1}
f(x_t,x)\leq0   \mbox{ for all } t \in [0,1].
\end{equation}
If $x\in\Q$, then $f(x,y)\geq0$.
However, if $x\notin \Q$, then
\[
f(x_t,x)=\left\lbrace\begin{array}{cl}
                      0,&x_t\notin \Q,\\
                      1,&x_t\in\Q.
                     \end{array}
 \right.
\]
From (\ref{upper-sign-example1}) we have $x_t\notin\Q$, for all $t\in]0,1[$.
It follows that $x=y$ and consequently $f(x,y)=0$. Therefore, 
the bifunction $f$ has the upper sign property.

On the other hand, for all $i\in\{\overline{c},\overline{q},s\}$ it holds that
\[
 f_i(x,y)=\left\lbrace
\begin{matrix}
0,&x\in \Q\cap K\\
-1,&x\in K\setminus \Q
\end{matrix}
\right.
\]
and the regularization $f_i$ does not have the upper sign property on $K$, because taking $x=\sqrt{2}/2$ and $y=0$, we have
$f(x,y)=-1$ and $f(t\sqrt{2}/2,0)\leq0$, for all $t\in]0,1[$.
\end{example}

In contrast to our result on generalized {monotonicity}, where from the inequality \eqref{gleqf} 
the property is transmitted from the bifunction $f$ to $g$, the upper sign property is transmitted from $g$ to $f$.

\begin{lemma}\label{prop-upper}
Let  $f,g: K\times K\to\R$ be two bifunctions satisfying \eqref{gleqf}. 
If $g$ has the upper sign property on $K$, then $f$ also has this property.
\end{lemma}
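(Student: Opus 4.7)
The plan is to directly chase the inequality \eqref{gleqf} through both sides of the upper sign implication, which flows in a very convenient direction here. Fix an arbitrary $x\in K$ at which we want to verify the property for $f$, and let $y\in K$ be arbitrary. Suppose the hypothesis of \eqref{upper} holds for $f$, namely $f(x_t,x)\leq 0$ for all $t\in\,]0,1[$, where $x_t=(1-t)x+ty$.

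The first step is to transfer this hypothesis to $g$. Since $g(u,v)\leq f(u,v)$ for every $u,v\in K$ by \eqref{gleqf}, applying this at $(u,v)=(x_t,x)$ gives $g(x_t,x)\leq f(x_t,x)\leq 0$ for all $t\in\,]0,1[$. The second step is to invoke the hypothesis that $g$ has the upper sign property at $x$: the implication \eqref{upper} for $g$ now yields $g(x,y)\geq 0$. The final step is to transfer the conclusion back to $f$ via \eqref{gleqf} applied at $(u,v)=(x,y)$: since $f(x,y)\geq g(x,y)\geq 0$, we obtain $f(x,y)\geq 0$, which is the desired conclusion of \eqref{upper} for $f$.

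Since $x\in K$ was arbitrary and $y\in K$ was arbitrary, this shows $f$ has the upper sign property on $K$. There is essentially no obstacle here: the argument is a two-line sandwich using \eqref{gleqf} on both ends of the implication, and it explains the asymmetry noted in the paragraph preceding the lemma (the direction of inheritance is reversed compared to the monotonicity result of Lemma~\ref{prop-mono-upper}, because the upper sign property has its defining inequality $f(x_t,x)\leq 0$ in the hypothesis and $f(x,y)\geq 0$ in the conclusion, so being dominated by a bifunction with the property is exactly what is needed).
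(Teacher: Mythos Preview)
Your proof is correct and follows exactly the same approach as the paper: assume $f(x_t,x)\le 0$, use \eqref{gleqf} to get $g(x_t,x)\le 0$, apply the upper sign property of $g$ to obtain $g(x,y)\ge 0$, and then use \eqref{gleqf} again to conclude $f(x,y)\ge 0$. The paper's argument is the same two-line sandwich, just stated more tersely.
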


\begin{proof}
Let $x,y \in K$ such that $f(x_t,x)\leq 0$ for all $t\in]0,1[$. 
Since $g(x,y)\leq f(x,y)$ for all $x,y\in K$, then
$g(x_t,x)\leq 0$ for all $t\in]0,1[$.
The upper sign property of $g$ implies that
$g(x,y)\geq0$, and therefore that $f(x,y)\geq 0$. 
\end{proof}
\begin{remark}
With the hypothesis of Proposition \ref{prop-upper}, if $g$ has local upper sign property, then $f$ also has the local upper sign property.
\end{remark}

From Proposition \ref{prop-upper} and the inequalities on (\ref{desi1}) we have the following result.
\begin{theorem}\label{preserva-upper}
If some regularization of a bifunction has the (local) upper sign property,
then the bifunction also has the (local) upper sign property. 
\end{theorem}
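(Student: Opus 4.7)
The plan is to derive Theorem \ref{preserva-upper} as an almost immediate consequence of Lemma \ref{prop-upper} together with the chain of inequalities in \eqref{desi1}. The key observation is that every regularization $f_i$ with $i\in\{c,q,\overline{c},\overline{q},s\}$ satisfies $f_i(x,y)\leq f(x,y)$ for all $x,y\in K$, which is exactly the hypothesis \eqref{gleqf} needed to apply Lemma \ref{prop-upper}, with $g:=f_i$ playing the role of the smaller bifunction and $f$ itself playing the role of the larger one.

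So I would proceed as follows. Fix an index $i\in\{c,q,\overline{c},\overline{q},s\}$ and suppose that $f_i$ has the upper sign property on $K$. From \eqref{desi1} we immediately obtain $f_i(x,y)\leq f(x,y)$ for every $x,y\in K$. Setting $g:=f_i$, the pair $(g,f)$ therefore satisfies \eqref{gleqf}, and Lemma \ref{prop-upper} applies and yields that $f$ has the upper sign property on $K$. For the local version, one argues identically using the Remark following Lemma \ref{prop-upper}: since the implication in \eqref{upperlocal} is tested only on points $y\in K\cap B(x,r)$, the same pointwise domination $f_i\leq f$ forces the localized implication for $f$ whenever it holds for $f_i$.

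There is essentially no obstacle here, because all the analytical work has already been absorbed into Lemma \ref{prop-upper}. The only thing one might want to make explicit is that the domination $f_i\leq f$ holds pointwise on all of $K\times K$ (not only on $\operatorname{dom}(f(x,\cdot))$), which is a trivial consequence of the definition of each regularization as the greatest minorant within the appropriate class, and of the inequalities already recorded in \eqref{desi1}. Thus the proof reduces to a one-line invocation of Lemma \ref{prop-upper}, applied to each of the five possible choices of regularization.
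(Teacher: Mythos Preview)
Your proposal is correct and follows exactly the paper's approach: the theorem is stated as an immediate consequence of Lemma~\ref{prop-upper} (and the Remark after it for the local case) together with the inequalities in \eqref{desi1}. There is nothing to add.
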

The following result states that the quasiconvex regularization of a bifunction is upper semicontinuous on the second variable, provided
that the bifunction also is.

\begin{proposition}\label{fq-upper}
{Let $f\in\mathcal{Q}(K)$. 
If  $f(\cdot, y)$ is an upper semicontinuous function for every $y\in K$,
then  $f_q$ is also upper semicontinuous with respect to first argument.}
\end{proposition}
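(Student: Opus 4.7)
The plan is to verify upper semicontinuity of $f_q(\cdot,y)$ at each point $x_0\in K$ directly from the definition of the quasiconvex regularization. Fix $y\in K$, $x_0\in K$, and $\mu>f_q(x_0,y)$; I will produce a neighborhood $V$ of $x_0$ on which $f_q(\cdot,y)<\mu$. Since $f\in\mathcal{Q}(K)$, the value $f_q(x_0,y)$ is a real number, so such a $\mu$ is genuine.

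First I would extract a finite certificate for the inequality $f_q(x_0,y)<\mu$. By the definition of $f_q$ as an infimum, there exists $\lambda<\mu$ with $y\in{\rm co}(S_\lambda(f(x_0,\cdot)))$. Since the convex hull in a vector space is the union of convex combinations of finitely many points, I can fix a representation $y=\sum_{i=1}^{n}t_iz_i$ with $t_i\geq 0$, $\sum_{i=1}^n t_i=1$, and $f(x_0,z_i)\leq\lambda$ for every $i$.

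Next, I would pick an intermediate scalar $\lambda'\in(\lambda,\mu)$ and invoke upper semicontinuity of each $f(\cdot,z_i)$ at $x_0$: since $f(x_0,z_i)\leq\lambda<\lambda'$, there is a neighborhood $V_i$ of $x_0$ with $f(x,z_i)<\lambda'$ for every $x\in V_i$. Setting $V=\bigcap_{i=1}^{n}V_i$, which is still a neighborhood of $x_0$ because only finitely many sets are intersected, every $x\in V$ satisfies $z_i\in S_{\lambda'}(f(x,\cdot))$ for all $i$. Consequently $y=\sum_{i=1}^{n}t_iz_i\in{\rm co}(S_{\lambda'}(f(x,\cdot)))$, and the infimum defining $f_q(x,y)$ yields $f_q(x,y)\leq\lambda'<\mu$ on $V$, which is exactly upper semicontinuity of $f_q(\cdot,y)$ at $x_0$.

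I do not foresee a serious obstacle. The crucial observation is that $f_q(x_0,y)<\mu$ is already witnessed by the \emph{finitely many} pointwise values $f(x_0,z_i)$, which allows the pointwise upper semicontinuity hypothesis to propagate to $f_q$ via a finite intersection of neighborhoods. The only delicate bookkeeping is the insertion of the strict intermediate value $\lambda'$ so that the usc definition can be applied with strict inequality, and the use of $f\in\mathcal{Q}(K)$ to guarantee the finiteness of $f_q(x_0,y)$ needed to select $\lambda$.
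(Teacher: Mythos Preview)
Your proposal is correct and follows essentially the same route as the paper: extract a finite convex combination $y=\sum t_i z_i$ witnessing $f_q(x_0,y)<\mu$, use upper semicontinuity of each $f(\cdot,z_i)$ to get a common neighborhood, and conclude. The only cosmetic difference is that the paper closes with the quasiconvexity of $f_q(x',\cdot)$ (from $f_q(x',z_i)<f_q(x_0,y)+\varepsilon$ deduce $f_q(x',y)<f_q(x_0,y)+\varepsilon$), whereas you go back to the infimum definition via $y\in{\rm co}(S_{\lambda'}(f(x,\cdot)))$; both finish the argument equally well.
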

\begin{proof}
For every $\varepsilon>0$ and $\lambda\in\R$ such that $f_q(x,y)<\lambda<f_q(x,y)+\varepsilon$.
Then  $y\in{\rm co}(S_{\lambda}(f(x,\cdot)))$ and this implies that
there exists $y_1,\dots,y_m\in S_{\lambda}(f(x,\cdot))$ and $t_1,\dots,t_m\in[0,1]$ such that
$\sum_{k=1 }^m t_k=1$ and $y=\sum_{k=1}^mt_ky_k$.
Since $f(\cdot,y_k)$ is upper semicontinuous, there exists
a neigborhood $V$ of $x$ such that for all $x'\in V$ and all $k\in\{1,\dots,m\}$ hold
$f(x',y_k)<f_q(x,y)+\varepsilon$.
Therefore, 
$
f_q(x',y_k)\leq f(x',y_k)<f_q(x,y)+\varepsilon$ for each $k\in\{1,\dots,m\}$ and for all $x'\in V$.
So, by the quasiconvexity of $f_q(x,\cdot)$, we have that
$ f_q(x',y)<f_q(x,y)+\varepsilon$ for all $x'\in V$.
\end{proof}

\begin{remark}\label{Nash-important}
{The previous result is also true for the convex 
regularizations.}
\end{remark}
We define the following subfamily of $\mathcal{Q}(K)$:
\[
 \mathcal{SQ}(K)=\{f\in \mathcal{Q}(K):~ {f_{q}}(x,\cdot)\mbox{ is semistrictly quasiconvex for all }x\in K\}
\]
Clearly, $\mathcal{C}(K)\subset  \mathcal{SQ}(K)\subset  \mathcal{Q}(K)$.

%

As we have mentioned earlier, the local sign property and the sign property are equivalent under 
condition \eqref{beta}, a condition that holds for ${f_{q}}$ with $f \in \mathcal{SQ}(K)$, but no for $f$ itself.

%

\begin{example}
Let $K=[0,2]$ and let $f:K\times K\to\R$ be a bifunction defined by:
\[
f(x,y)=\left\lbrace\begin{array}{cc}
 y-2,&y\neq 1\\
 0,&y=1
\end{array}
\right. 
\]
It is not difficult to see that ${ f_{q}}(x,y)=y-2$ for all $x,y\in K$. Therefore, $f\in \mathcal{SQ}(K)$.
However $f$ does  not satisfy the condition (\ref{beta}). Indeed, taking $y_1=0$ and $y_2=2$ we have $f(x,y_1)<0$ and $f(x,y_2)=0$, but $f(x,1)=0$.
\end{example}

\begin{proposition}
Let $f\in\mathcal{SQ}(K)$ be a bifunction such that $f$ is upper semicontinuous  with respect to first variable.
Then $f_q$ has the upper sign property if, and only if, $f_q(x,x)\geq0$, for all $x\in K$.
\end{proposition}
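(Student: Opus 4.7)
My plan is to treat the two implications separately, relying on: (i) $f_q(x, \cdot)$ is semistrictly quasiconvex for every $x \in K$ (built into $f \in \mathcal{SQ}(K)$); and (ii) $f_q(\cdot, y)$ is upper semicontinuous for every $y \in K$ (supplied by Proposition~\ref{fq-upper}, since $f \in \mathcal{Q}(K)$ and $f(\cdot, y)$ is upper semicontinuous by assumption).

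The forward implication is nearly trivial. Given $x \in K$, I would apply the upper sign property at $x$ with the choice $y = x$: then $x_t = (1-t)x + tx = x$ for every $t$, so the premise $f_q(x_t, x) \leq 0$ collapses to $f_q(x, x) \leq 0$. If it is satisfied, the conclusion yields $f_q(x, x) \geq 0$ and hence equality; otherwise $f_q(x, x) > 0$ outright. Either way, $f_q(x, x) \geq 0$.

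For the converse, assume $f_q(z, z) \geq 0$ for every $z \in K$, fix $x, y \in K$ with $f_q(x_t, x) \leq 0$ for all $t \in \,]0, 1[$, and aim to establish $f_q(x, y) \geq 0$. The argument hinges on the intermediate claim that $f_q(x_t, y) \geq 0$ for every such $t$. Supposing for contradiction that $f_q(x_t, y) < 0$, I would observe that $x_t$ is a strict convex combination of $x$ and $y$ and run a short case analysis on the sign of $f_q(x_t, x) - f_q(x_t, y)$; in every case a combination of quasiconvexity and the semistrict quasiconvexity of $f_q(x_t, \cdot)$ forces $f_q(x_t, x_t) < 0$, contradicting the standing hypothesis $f_q(x_t, x_t) \geq 0$. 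Once this intermediate claim is in hand, I would let $t \to 0^+$; then $x_t \to x$, and the upper semicontinuity of $f_q(\cdot, y)$ at $x$ gives $f_q(x, y) \geq \limsup_{t \to 0^+} f_q(x_t, y) \geq 0$.

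The main obstacle is precisely the subcase $f_q(x_t, x) = 0$ inside the intermediate claim, where plain quasiconvexity only delivers $f_q(x_t, x_t) \leq \max\{f_q(x_t, x), f_q(x_t, y)\} = 0$, which is compatible with $f_q(x_t, x_t) \geq 0$ and therefore yields no contradiction. Here the semistrict quasiconvexity of $f_q(x_t, \cdot)$ becomes essential: applied to the strict inequality $f_q(x_t, y) < 0 = f_q(x_t, x)$, it strengthens the estimate at the interior point $x_t$ to $f_q(x_t, x_t) < 0$, closing the contradiction. The remaining cases $f_q(x_t, x) < 0$ are disposed of by plain quasiconvexity since then $\max\{f_q(x_t, x), f_q(x_t, y)\} < 0$ already.
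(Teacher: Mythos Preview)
Your proof is correct and follows essentially the same route as the paper: the forward implication is immediate, and for the converse you argue by contradiction that $f_q(x_t,y)\geq 0$ using the semistrict quasiconvexity of $f_q(x_t,\cdot)$ to force $f_q(x_t,x_t)<0$, then pass to the limit $t\to 0^+$ via the upper semicontinuity of $f_q(\cdot,y)$ supplied by Proposition~\ref{fq-upper}. Your explicit case split on the sign of $f_q(x_t,x)$ is more detailed than the paper's one-line appeal to semistrict quasiconvexity, but the underlying idea is identical.
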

\begin{proof}
It is clear that if  $f_q$ has the upper sign property then $f_q(x,x)\geq0$, for all $x\in K$.

The converse, let $x,y\in K$ such that $f_q(x_t,x)\leq0$, for every $x_t=tx+(1-t)y$, $t\in]0,1[$.
If there exists $t\in]0,1[$ such that $f_q(x_t,y)<0$ then by semistrictly quasiconvexity of $f_q(x_t,\cdot)$ we have 
$f_q(x_t,x_t)<0$, which is a contradiction. So, $f_q(x_t,y)\geq0$ for all $t\in]0,1[$. 
By Proposition \ref{fq-upper} $f_q$
is upper semicontinuous with respect to first variable and this imply $f(x,y)\geq0$.
\end{proof}

\section{Equilibrium Problems vs Convex Feasibility Problems}
We denote by $\operatorname{EP}(f,K)$ and $\operatorname{CFP}(f,K)$ the solution sets of \eqref{EP} and \eqref{cfp}, respectively.

\begin{lemma}\label{g-f}
Let $f,g:K\times K\to\R$ be two bifunctions {satisfying \eqref{gleqf}.}
Then $\operatorname{EP}(g,K)\subset \operatorname{EP}(f,K)$ and $\operatorname{CFP}(f,K)\subset \operatorname{CFP}(g,K)$.
\end{lemma}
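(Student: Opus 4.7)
The plan is to verify each inclusion directly from the pointwise inequality $g(x,y)\leq f(x,y)$, with no further machinery needed. The two inclusions go in opposite directions precisely because the roles of the arguments in (EP) and (CFP) are swapped: in (EP) we ask for $f(x,\cdot)\geq 0$ on $K$, so a bifunction that lies \emph{above} another inherits its equilibria; in (CFP) we ask for $f(\cdot,x)\leq 0$ on $K$, so a bifunction that lies \emph{below} another inherits its convex feasibility solutions.

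For the first inclusion, I would fix $x\in\operatorname{EP}(g,K)$, so that $g(x,y)\geq 0$ for every $y\in K$. Combining this with the hypothesis $g(x,y)\leq f(x,y)$ (applied at the same pair $(x,y)$) immediately yields $f(x,y)\geq g(x,y)\geq 0$ for all $y\in K$, which means $x\in\operatorname{EP}(f,K)$.

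For the second inclusion, I would fix $x\in\operatorname{CFP}(f,K)$, so that $f(y,x)\leq 0$ for every $y\in K$. The hypothesis applied to the pair $(y,x)$ (note the swapped order) gives $g(y,x)\leq f(y,x)$, hence $g(y,x)\leq 0$ for all $y\in K$, so $x\in\operatorname{CFP}(g,K)$.

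There is no genuine obstacle here; the only thing to be careful about is keeping track of which argument plays which role when invoking \eqref{gleqf}, since the definitions of (EP) and (CFP) differ by a transposition of arguments. This is what forces the two inclusions to run in opposite directions, and it is the only subtlety worth mentioning in the write-up.
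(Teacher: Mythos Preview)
Your proof is correct and follows exactly the approach of the paper, which simply states that the result follows directly from the definitions of \eqref{EP} and \eqref{cfp}. Your write-up merely makes explicit the two one-line verifications that the paper leaves to the reader.
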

\begin{proof}
It follows directly from definitions of \eqref{EP} and \eqref{cfp}. 
\end{proof}

Clearly, \eqref{desi1} and Lemma \ref{g-f} imply that if $f\in{\overline{\mathcal{C}}(K)}$ then
\begin{equation}\label{inclusion-EP}
\operatorname{EP}(f_{\overline{c}},K)\subset \operatorname{EP}(f_{\overline{q}},K)\subset \operatorname{EP}(f_s,K)\subset \operatorname{EP}(f,K)
\end{equation}
and
\begin{equation}\label{inclusion-CFP}
\operatorname{CFP}(f,K)\subset \operatorname{CFP}( f_s,K)\subset \operatorname{CFP}(f_{\overline{q}},K) \subset \operatorname{CFP}(f_{\overline{c}},K).
\end{equation}

The following result says that if a bifunction admits convex {and lower semicontinuous} regularization, then the solution sets
of equilibrium problem associated this bifunction and its regularizations are the same.
\begin{proposition}\label{igualdad}
If $f \in {\overline{\mathcal{C}}(K)}$ then 
$\operatorname{EP}(f_i,K)=\operatorname{EP}(f,K)$ for all $i\in\{c,q,s,\overline{c},\overline{q}\}$.
\end{proposition}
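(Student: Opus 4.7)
The plan is to sandwich all five solution sets between $\operatorname{EP}(f_{\overline{c}},K)$ and $\operatorname{EP}(f,K)$ and then collapse the sandwich. The chain of inequalities in (\ref{desi1}) gives $f_{\overline{c}}(x,y) \leq f_i(x,y) \leq f(x,y)$ for every $i \in \{c,q,s,\overline{c},\overline{q}\}$ and every $x,y \in K$, so Lemma \ref{g-f} yields
$$\operatorname{EP}(f_{\overline{c}},K) \subset \operatorname{EP}(f_i,K) \subset \operatorname{EP}(f,K).$$
Hence it suffices to prove the single inclusion $\operatorname{EP}(f,K) \subset \operatorname{EP}(f_{\overline{c}},K)$; once this is established, the five solution sets collapse.

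For this inclusion I would fix $x \in \operatorname{EP}(f,K)$, so $f(x,y) \geq 0$ for every $y \in K$. Reading $\operatorname{epi}(f(x,\cdot))$ inside $X \times \R$ under the standing convention that $f(x,y) = +\infty$ for $y \notin K$, the non-negativity on $K$ translates directly into
$$\operatorname{epi}(f(x,\cdot)) \subset K \times [0,+\infty).$$
Since $K$ is closed and convex, so is $K \times [0,+\infty)$, and passing to the closed convex hull preserves the inclusion. Using the identity $\operatorname{epi}(f_{\overline{c}}(x,\cdot)) = \overline{\operatorname{co}}(\operatorname{epi}(f(x,\cdot)))$ recalled in Section 2, we obtain $\operatorname{epi}(f_{\overline{c}}(x,\cdot)) \subset K \times [0,+\infty)$. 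The infimum definition of $f_{\overline{c}}$ then forces $f_{\overline{c}}(x,y) \geq 0$ for every $y \in K$, i.e., $x \in \operatorname{EP}(f_{\overline{c}},K)$.

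The main obstacle is not analytic but notational: one has to be careful that the epigraph $\operatorname{epi}(f(x,\cdot))$ is understood in $X\times\R$ with the implicit $+\infty$-extension of $f(x,\cdot)$ outside $K$. Once this is accepted, the argument uses only the elementary fact that any closed convex set containing a given set also contains its closed convex hull, and so no Hahn--Banach separation (as in the preceding proposition) is required. The hypothesis $f \in \overline{\mathcal{C}}(K)$ is used only to guarantee, via the inclusions in (\ref{inclusions-K}) and the sandwich $f_{\overline{c}} \leq f_i \leq f$, that each $f_i$ is real-valued on $K\times K$, so that the statement $f_i(x,y) \geq 0$ is meaningful.
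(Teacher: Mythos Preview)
Your argument is correct and follows the same strategy as the paper's own proof: reduce to the single inclusion $\operatorname{EP}(f,K)\subset\operatorname{EP}(f_{\overline{c}},K)$ via the sandwich coming from (\ref{desi1}) and Lemma~\ref{g-f}, and then establish that inclusion by observing that $\operatorname{epi}(f(x,\cdot))\subset K\times[0,+\infty[$ forces $\operatorname{epi}(f_{\overline{c}}(x,\cdot))=\overline{\operatorname{co}}(\operatorname{epi}(f(x,\cdot)))\subset K\times[0,+\infty[$. Your write-up is in fact slightly more careful than the paper's, which invokes only the chain (\ref{inclusion-EP}) (covering $\overline{c},\overline{q},s$) and leaves the cases $i\in\{c,q\}$ implicit.
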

\begin{proof}
By (\ref{inclusion-EP}) it is enough to show that  $\operatorname{EP}(f,K)\subset \operatorname{EP}(f_{\overline{c}},K)$.
For each
$x\in \operatorname{EP}(f,K)$ we have that $\operatorname{epi}(f(x,\cdot))\subset K\times[0,+\infty[$ and consequently
$\operatorname{epi}(f_{\overline{c}}(x,\cdot))\subset K\times[0,+\infty[$, i.e. $x\in \operatorname{EP}(f_{\overline{c}},K)$.
\end{proof}
\begin{remark}\label{regu-semi-equivalente}
In Proposition \ref{igualdad}:
\begin{itemize}
 \item If $f\in{\overline{\mathcal{Q}}(K)}$ then $\operatorname{EP}(f_i,K)=\operatorname{EP}(f,K)$, for all $i\in\{q,s,\overline{q}\}$.
  \item If $f\in\mathcal{S}(K)$ then $\operatorname{EP}(f_s,K)=\operatorname{EP}(f,K)$.
\item {If $f\in\mathcal{C}(K)$ then $\operatorname{EP}(f_i,K)=\operatorname{EP}(f,K)$, for all $i\in\{c,q\}$.}
\item {If $f\in\mathcal{Q}(K)$ then $\operatorname{EP}(f_q,K)=\operatorname{EP}(f,K)$.}
\end{itemize}
\end{remark}
The inclusions in (\ref{inclusion-CFP}) and Proposition \ref{igualdad} motivate the following question: 
Do a bifunction and its regularizations have the same solution set for the convex feasibility problem?
The following example gives a negative answer.
\begin{example}
Let $K=[0,1]$ and let $f:K\times K\to\R$ defined as 
\[
 f(x,y)=\left\lbrace\begin{array}{cl}
                     y,&y\in[0,1[\\
                     0,&y=1
                    \end{array}
\right.
\]
For every $x\in K$, the graph of $f(x,\cdot)$ is
\begin{center}
\begin{tikzpicture}[scale=2]
\draw[->](0,0)--(1.5,0)node[below right]{$y$}; 
\draw[->](0,0)--(0,1.5)node[left]{$\R$};
\draw(0,0)--(1,1);
\fill[white](1,1)circle(1.2pt);
\draw(1,1)circle(1.2pt);
\fill(0,0)circle(1.2pt);
\fill(1,0)circle(1.2pt);
\draw[dashed](1,0)--(1,0.9);
\draw(1,0)node[below]{$1$};
\draw(0,1)node[left]{$1$};
\draw[dashed](0,1)--(0.9,1);
\end{tikzpicture}
\end{center}
It is not difficult to see that $\operatorname{CFP}(f,K)=\{0,1\}$. On the other hand,
$f_{\overline{c}}(x,y)=0$ for all $x,y\in K$ and this implies $\operatorname{CFP}(f_{\overline{c}},K)=[0,1]$.
\end{example}

Bianchi and Pini \cite{BP05} considered a weaker concept of solution for \eqref{cfp}, similar that the one 
proposed by Aussel and Hadjisavvas \cite{AH04} in the setting of variational inequalities. They define the set of the \emph{local solutions} 
\[
\operatorname{CFP}_{\operatorname{local}} (f,K)=\{x\in K:~\exists r>0\mbox{ s.t. }f(y,x)\leq0~\forall y\in K\cap B(x,r)\}.
\]
Clearly, $\operatorname{CFP}(f,K)\subset \operatorname{CFP}_{\operatorname{local}} (f,K)$.

In the following result, part \emph{(i)}  is from \cite[Theorem 1]{castellani2012}, 
and part \emph{(ii)} is an adaptation of \cite[Proposition 3.1]{ACI}. 

\begin{proposition}\label{caste-ACI} Let $f:K\times K\to\R$ be a bifunction.
\begin{enumerate}
 \item[(i)] \label{ffkk} If $f$ has the local upper sign property and satisfies (\ref{beta}) then 
 $\operatorname{CFP}_{\operatorname{local}}(f,K)\subset \operatorname{EP}(f,K)$.
 \item[(ii)] If $f$ has the upper sign property then $\operatorname{CFP}(f,K)\subset \operatorname{EP}(f,K)$.
\end{enumerate}
\end{proposition}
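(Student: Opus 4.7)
My plan is to handle (ii) by a direct unwinding of the definitions, and then tackle (i) in two phases: first establishing the EP condition in a small neighbourhood of $x$, and then extending it to all of $K$ via condition (\ref{beta}).

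For part (ii), I would fix $x\in\operatorname{CFP}(f,K)$ and an arbitrary $y\in K$, and observe that convexity of $K$ places $x_t=(1-t)x+ty$ in $K$ for every $t\in\,]0,1[$. The CFP assumption then gives $f(x_t,x)\leq 0$ on all of $]0,1[$, which is exactly the premise of the upper sign property at $x$ for this choice of $y$, so the conclusion $f(x,y)\geq 0$ is immediate. No clever idea is needed beyond translating the CFP condition along the segment into the input of the sign property.

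For part (i), I would fix $x\in\operatorname{CFP}_{\operatorname{local}}(f,K)$, let $r_0>0$ witness the local CFP assumption and $r_1>0$ come from the local upper sign property at $x$, and set $r=\min(r_0,r_1)$. The first step is that for any $y\in K\cap B(x,r)$, convexity of the ball $B(x,r_0)$ forces the entire segment $\{x_t\}_{t\in[0,1]}$ into $K\cap B(x,r_0)$, so $f(x_t,x)\leq 0$ for all $t\in\,]0,1[$, and the local upper sign property then delivers $f(x,y)\geq 0$. Specialising to $y=x$ yields $f(x,x)\geq 0$, which combined with $f(x,x)\leq 0$ from local CFP produces $f(x,x)=0$.

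The remaining and harder task is to upgrade $f(x,y)\geq 0$ from the local set $K\cap B(x,r)$ to all of $K$. I would argue by contradiction: if some $y\in K$ satisfied $f(x,y)<0$, then since $f(x,x)=0$, condition (\ref{beta}) would propagate the negativity along the segment, yielding $f(x,x_t)<0$ for every $t\in\,]0,1[$, where $x_t=tx+(1-t)y$. Letting $t\to 1^-$ drags $x_t$ into $K\cap B(x,r)$, where the first step forces $f(x,x_t)\geq 0$, a contradiction. The main obstacle is precisely this globalisation: the local upper sign property alone only controls $y$ near $x$, and it is condition (\ref{beta}) that bridges the gap by allowing a distant negative value of $f(x,\cdot)$ to slide back along the segment into the region we have already controlled.
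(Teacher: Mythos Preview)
Your proof is correct. Note, however, that the paper does not actually supply its own proof of this proposition: it simply attributes part~(i) to \cite[Theorem 1]{castellani2012} and part~(ii) to an adaptation of \cite[Proposition 3.1]{ACI}. Your argument is exactly the standard one behind those references---for (ii) the CFP condition along the segment feeds the premise of the upper sign property directly, and for (i) the local version gives the EP inequality on a small ball, after which condition~(\ref{beta}) propagates any hypothetical negative value of $f(x,\cdot)$ back into that ball to reach a contradiction. There is nothing to add or correct.
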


The last result shows that, in order to obtain a solution of the equilibrium problem, it is enough to obtain a solution for the convex feasibility problem under the upper sign property, or under the local upper sign property and (\ref{beta}).
As a consequence of Proposition \ref{caste-ACI}, (\ref{inclusion-CFP}) and Remark \ref{regu-semi-equivalente}, we have the following result.

\begin{proposition}\label{extension-ACI}
Let $f\in \mathcal{S}(K)$.
\begin{enumerate}
 \item[(i)] If $f_s$ has the local upper sign property and satisfies the condition (\ref{beta}) then 
 $ \operatorname{CFP}_{\operatorname{local}}(f_s,K)\subset \operatorname{EP}(f,K)$.

 \item[(ii)] If $f_s$ has the upper sign property then $\operatorname{CFP}(f_s,K)\subset \operatorname{EP}(f,K)$.
 \item[(iii)] If $f\in \mathcal{SQ}(K)$ and 
 ${f_{q}}$ has the upper sign property then 
 $\operatorname{CFP}_{\operatorname{local}}({f_{q}},K)\subset \operatorname{EP}(f,K)$.
\end{enumerate}
\end{proposition}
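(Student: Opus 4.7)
The plan is to obtain all three inclusions as a routine composition of two ingredients already in place: first use Proposition \ref{caste-ACI} on the regularized bifunction $f_s$ or $f_q$, and then use Remark \ref{regu-semi-equivalente} to identify $\operatorname{EP}(f_s,K)$ or $\operatorname{EP}(f_q,K)$ with $\operatorname{EP}(f,K)$. There is essentially no new computation required.

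For part (i), starting from $f\in\mathcal{S}(K)$, the regularization $f_s$ is well defined on $K$. Applying Proposition \ref{caste-ACI}(i) directly to $f_s$ under the stated hypotheses yields $\operatorname{CFP}_{\operatorname{local}}(f_s,K)\subset\operatorname{EP}(f_s,K)$. Then Remark \ref{regu-semi-equivalente} gives $\operatorname{EP}(f_s,K)=\operatorname{EP}(f,K)$, which chains to the desired conclusion. Part (ii) is identical, except one invokes Proposition \ref{caste-ACI}(ii) in place of (i), so that the local hypotheses are replaced by the global upper sign property on $f_s$ and local solutions by global ones.

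Part (iii) is slightly more delicate, and is the only place where any subtle checking is needed. Since $f\in\mathcal{SQ}(K)\subset\mathcal{Q}(K)$, the regularization $f_q$ is well defined and by definition of $\mathcal{SQ}(K)$, the function $f_q(x,\cdot)$ is semistrictly quasiconvex for every $x\in K$. As was recalled at the end of Section 2, semistrict quasiconvexity in the second argument implies the condition (\ref{beta}); hence $f_q$ automatically satisfies (\ref{beta}). Combined with the upper sign property of $f_q$ (which is stronger than the local upper sign property), Proposition \ref{caste-ACI}(i) applied to $f_q$ gives $\operatorname{CFP}_{\operatorname{local}}(f_q,K)\subset\operatorname{EP}(f_q,K)$. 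Finally, by Remark \ref{regu-semi-equivalente}, $\operatorname{EP}(f_q,K)=\operatorname{EP}(f,K)$, completing the chain.

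The only potential pitfall is (iii), where one must remember that $\mathcal{SQ}(K)$ is exactly designed so that $f_q$ inherits (\ref{beta}) even when $f$ itself does not, as was illustrated by the example immediately preceding this proposition. Beyond that, the entire argument is a three-line bookkeeping exercise stringing together Proposition \ref{caste-ACI} and the equality of solution sets from Remark \ref{regu-semi-equivalente}.
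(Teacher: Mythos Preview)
Your proposal is correct and follows essentially the same approach as the paper, which presents the proposition simply ``as a consequence of Proposition~\ref{caste-ACI}, (\ref{inclusion-CFP}) and Remark~\ref{regu-semi-equivalente}.'' Your unpacking of the three parts---applying Proposition~\ref{caste-ACI} to the appropriate regularization and then identifying solution sets via Remark~\ref{regu-semi-equivalente}, with the extra observation in (iii) that semistrict quasiconvexity of $f_q(x,\cdot)$ forces condition~(\ref{beta})---is exactly what the paper intends.
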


The following examples show that the nonemptiness of the solution set of an equilibrium problem cannot be directly deduced  from Proposition \ref{caste-ACI}.

\begin{example}\label{dos-eje}
Let $K=\R$ and let $f:K\times K\to\R$ be a bifunction defined by:
\[
f(x,y)=\left\lbrace\begin{array}{cl}
                    0,& x,y\in \Q\cap K~\vee~ x=y\\
                    1,& \mbox{otherwise}.
                   \end{array}\right. 
\]
The bifunction $f$ has the upper sign property on $K$, and
it is not difficult to show that $\operatorname{CFP}_{\operatorname{local}}(f,K)=\emptyset$.

Moreover, $f_s(x,y)=0$ for all $x, y \in K$, which implies that $f_s$ is properly quasimonotone, it has the upper sign property on $K$ and 
$\operatorname{CFP}_{\operatorname{local}}(f_s,K)=K$.
Therefore, by Proposition \ref{extension-ACI} \emph{(i)} we have $\operatorname{EP}(f,K)$ is nonempty.


\end{example}

\begin{example}
Let $K=[0,+\infty[$ and let $f:K\times K\to\R$ defined by:
\[
f(x,y)=\left\lbrace\begin{array}{cl}
                    0,& y=0\\
                    1/y,& y\neq0.
                   \end{array}\right. 
\]
\end{example}
It is not difficult to see that 
$\operatorname{CFP}_{\operatorname{local}}(f_s,K)=\operatorname{CFP}_{\operatorname{local}}(f,K)=\emptyset$. 
On the other hand, ${f_{q}}(x,y)=0$
for all $x,y\in K$. Thus, $f\in \mathcal{SQ}(K)$ and ${f_{q}}$ has the upper sign property.
Moreover, $\operatorname{CFP}_{\operatorname{local}}({f_{q}},K)=K$ and by 
Proposition \ref{extension-ACI} \emph{(iii)} we have that $\operatorname{EP}(f,K)$ is nonempty. 

\section{Existence results}


In 1972, Ky Fan proved his famous minimax inequality (cf. \cite[Theorem 1]{Kfan}).

\begin{theorem}[Ky Fan, 1972] \label{Ky1}
Let $V$ be a real Hausdorff topological vector space and $K$ a nonempty compact convex subset of $V$. If
$f:K\times K\to\R$ satisfies:
\begin{itemize}
 \item[$(i)$] $f(\cdot, y):K\to\R$ is upper semicontinuous for each $y\in K$,
 \item[$(ii)$] $f(x,\cdot): K\to\R$ is quasiconvex for each $x\in K$,
\end{itemize}
then there exists a point $x^*\in K$ such that
\[
 \inf_{y\in K}f(x^*,y)\geq \inf_{w\in K}f(w,w).
\]
\end{theorem}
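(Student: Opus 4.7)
The plan is to apply the classical KKM principle to a suitable family of level sets. Let $c=\inf_{w\in K}f(w,w)$ and, for each $y\in K$, define
\[
F(y)=\{x\in K:~f(x,y)\geq c\}.
\]
The goal is to show that $\bigcap_{y\in K}F(y)\neq\emptyset$, because any point $x^*$ in this intersection satisfies $f(x^*,y)\geq c$ for every $y\in K$, which is exactly the desired inequality.

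First I would verify that each $F(y)$ is closed in $K$: since $f(\cdot,y)$ is upper semicontinuous, the superlevel set $\{x:f(x,y)\geq c\}$ is closed, and intersecting with $K$ preserves this. Because $K$ is compact, each $F(y)$ is in fact compact. Next, I would show that the family $\{F(y)\}_{y\in K}$ is a KKM family, i.e.\ for any finite subset $\{y_1,\dots,y_n\}\subset K$ one has
\[
{\rm co}(\{y_1,\dots,y_n\})\subset \bigcup_{i=1}^n F(y_i).
\]
Indeed, take $x=\sum_{i=1}^n t_i y_i$ with $t_i\geq 0$ and $\sum t_i=1$. By the quasiconvexity of $f(x,\cdot)$,
\[
f(x,x)\leq \max_{1\leq i\leq n} f(x,y_i),
\]
and since $f(x,x)\geq c$ by definition of $c$, there must exist $i$ with $f(x,y_i)\geq c$, i.e.\ $x\in F(y_i)$.

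Having established closedness and the KKM property, I would invoke the Knaster--Kuratowski--Mazurkiewicz theorem (in its topological vector space version due to Ky Fan, already referenced in the paper as \cite{KKM,Kfan2}) to conclude that the whole family has the finite intersection property. Combined with the compactness of the $F(y)$'s, this yields $\bigcap_{y\in K}F(y)\neq\emptyset$, and any point $x^*$ in this intersection provides the desired conclusion.

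The main obstacle is checking the KKM condition, which is where quasiconvexity in the second variable is used in an essential way; once that step and the upper semicontinuity argument for closedness are in place, the result follows directly from KKM and compactness. No delicate coercivity or continuity arguments are needed because compactness of $K$ handles existence issues, and the bound $f(x,x)\geq c$ is tautological from the definition of $c$.
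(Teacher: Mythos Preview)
Your argument is correct and is essentially the classical proof of Ky Fan's minimax inequality via the KKM principle. Note, however, that the paper does not supply its own proof of this theorem: it is merely quoted as a known result with a reference to \cite{Kfan}, so there is no ``paper's proof'' to compare against. Your write-up reproduces the standard approach that Ky Fan himself used (define the superlevel multimap $F(y)=\{x\in K:f(x,y)\geq c\}$, check closedness via upper semicontinuity, verify the KKM property via quasiconvexity and the trivial bound $f(x,x)\geq c$, then apply Fan's topological-vector-space version of KKM together with compactness). One minor point you may want to make explicit for completeness: if $c=\inf_{w\in K}f(w,w)=-\infty$ the conclusion is vacuous and any $x^*\in K$ works, so one may assume $c\in\R$ before defining the sets $F(y)$.
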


The following result is a consequence of Theorem \ref{Ky1} and Proposition \ref{igualdad}.

\begin{coro}\label{R1}
Let  $K$ be a nonempty compact convex subset of  $X$ and $f\in{\mathcal{Q}(K)}$.
If the following assumptions hold:
\begin{enumerate}
 \item[(i)] $f_q(\cdot,y)$ is upper semicontinuous for all $y\in K$,
 \item[(ii)] $f_q(x,x)=0$ for all $x\in K$.
\end{enumerate}
Then $\operatorname{EP}(f,K)$ is a nonempty set.
\end{coro}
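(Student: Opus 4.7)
The plan is to apply Ky Fan's minimax inequality (Theorem \ref{Ky1}) directly to the quasiconvex regularization $f_q$, and then transfer the resulting solution back to $f$ via the equivalence of solution sets established earlier.

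First, I would verify that $f_q$ satisfies the hypotheses of Theorem \ref{Ky1} on the compact convex set $K$. Hypothesis (i) of the corollary gives upper semicontinuity of $f_q(\cdot,y)$ for every $y\in K$, which is exactly condition $(i)$ of Theorem \ref{Ky1}. For condition $(ii)$ of Theorem \ref{Ky1}, note that by the very definition of the quasiconvex regularization (it is the greatest quasiconvex function majorized by $f(x,\cdot)$), the function $f_q(x,\cdot)$ is quasiconvex for every $x\in K$. Since $f\in\mathcal{Q}(K)$, the regularization is well defined, so $f_q:K\times K\to\mathbb{R}$ is a genuine real-valued bifunction and Theorem \ref{Ky1} applies.

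Next, applying Theorem \ref{Ky1} to $f_q$ yields a point $x^*\in K$ such that
\[
\inf_{y\in K} f_q(x^*,y)\;\geq\; \inf_{w\in K} f_q(w,w).
\]
By hypothesis (ii) of the corollary, $f_q(w,w)=0$ for all $w\in K$, so $\inf_{w\in K}f_q(w,w)=0$, and therefore $f_q(x^*,y)\geq 0$ for every $y\in K$. In other words, $x^*\in\operatorname{EP}(f_q,K)$.

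Finally, since $f\in\mathcal{Q}(K)$, Remark \ref{regu-semi-equivalente} gives $\operatorname{EP}(f_q,K)=\operatorname{EP}(f,K)$, so $x^*\in\operatorname{EP}(f,K)$ and the solution set is nonempty. The argument is essentially a bookkeeping exercise once the two pieces are in place; there is no substantial obstacle, since the main technical content (Ky Fan's inequality, the quasiconvexity of $f_q(x,\cdot)$, and the equality of solution sets) is already available. The only point worth checking carefully is that $f_q(x,\cdot)$ being quasiconvex and $f_q\leq f$ are enough to conclude the equivalence used in the last step, but this is exactly Remark \ref{regu-semi-equivalente}, which in turn rests on Proposition \ref{igualdad}.
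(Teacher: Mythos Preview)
Your argument is correct and matches the paper's intended proof: the corollary is stated there as a direct consequence of Theorem~\ref{Ky1} (applied to $f_q$) together with Proposition~\ref{igualdad}/Remark~\ref{regu-semi-equivalente}, which is exactly the two-step route you describe.
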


\begin{remark}
If $f(\cdot,y)$ is upper semicontinuous for all $y\in K$, by Proposition \ref{fq-upper}
the assumption \emph{(i)} in  Corollary \ref{R1} is satisfied.
\end{remark}
\begin{example}
Let $K=[0,2]$ and let $f:K\times K\to\R$ be a bifunction defined by
\[
f(x,y)=\left\lbrace\begin{array}{cl}
                    0,& y\in[0,x[\\
                    -y+x,& y\in[x,2] ~\wedge~x\neq1\\
                    y-1,&y\in]1,2]~\wedge~x=1\\
                    1,& x=y=1.
                   \end{array}\right.
\]
The following pictures represent the graphs of the functions $f(x,\cdot)$:
\begin{center}
\begin{tikzpicture}[scale=1.1]
\draw[->](-0.5,0)--(2.5,0)node[below right]{$y$}; 
\draw[->](0,-2.5)--(0,1)node[left]{$\R$}; 
\draw(0,0)--(2,-2);
\fill(0,0)circle(1.4pt);
\fill(2,-2)circle(1.4pt);
\draw[dashed](2,0)--(2,-2);
\draw[dashed](0,-2)--(2,-2);
\draw(2,0)node[below left]{$2$};
\draw(0,-2)node[left]{$2$};
\draw(1,0.5)node[]{$x=0$};
\end{tikzpicture}
\quad
\begin{tikzpicture}[scale=1.1]
\draw[->](-0.5,0)--(2.5,0)node[below right]{$y$}; 
\draw[->](0,-2.5)--(0,1)node[left]{$\R$}; 
\draw[very thick](0,0)--(0.5,0);
\draw(0.5,0)--(2,-1.5);
\fill(0,0)circle(1.4pt);
\fill(2,-1.5)circle(1.4pt);
\draw[dashed](2,0)--(2,-1.5);
\draw[dashed](0,-1.5)--(2,-1.5);
\draw(2,0)node[below left]{$2$};
\draw(0,-1.5)node[left]{$-2+x$};
\draw(1,0.5)node[]{$x\neq 0, 1$};
\end{tikzpicture}
\quad
\begin{tikzpicture}[scale=1.25]
\draw[->](-0.5,0)--(2.5,0)node[below right]{$y$}; 
\draw[->](0,-1)--(0,2)node[left]{$\R$}; 
\draw[very thick](0,0)--(1,0);
\draw[domain=1:2]plot(\x,{\x-1});
\draw[dashed](1,0)--(1,1);
\draw[dashed](0,1)--(2,1);
\fill[white](1,0)circle(1.4pt);
\fill(0,0)circle(1.4pt);
\fill(2,1)circle(1.4pt);
\draw(1,0)circle(1.4pt);
\fill(1,1)circle(1.4pt);
\draw(1,0)node[below]{$1$};
\draw(0,1)node[left]{$1$};
\draw[dashed](2,0)--(2,1);
\draw(2,0)node[below]{$2$};
\draw(1,1.5)node[]{$x=1$};
\end{tikzpicture}
\end{center}
Clearly, $f(x,\cdot)$ is quasiconvex and continuous for all $x\neq 1$, but $f(1,\cdot)$ is not
quasiconvex. Nevertheless, $f_q(1,\cdot)$ is quasiconvex and continuous.
Therefore, $f\neq f_q$ and moreover $f_q(x,x)=0$ for all $x\in K$. Applying Corollary \ref{R1}
$\operatorname{EP}(f,K)$ is nonempty. Notice that the nonemptiness of $\operatorname{EP}(f,K)$ cannot be directly deduced from Theorem \ref{Ky1}.
\end{example}




The following result is a consequence of \cite[Proposition 2.1]{ACI}. 


\begin{proposition} \label{ACI1}
Let $K$ be a weakly compact subset of $X$ and $f$ be a properly quasimonotone bifunction such that for every $x\in K$
the set $\{y\in K:~f(x,y)\leq0\}$ is weakly closed. Then $\operatorname{CFP}(f,K)$  is nonempty.
\end{proposition}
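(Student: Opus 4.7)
The plan is to reduce the statement to the classical KKM--Fan lemma applied in the weak topology of $X$. Define the set-valued map $F:K\rightrightarrows K$ by
\[
F(x)=\{y\in K:~f(x,y)\leq 0\},\qquad x\in K.
\]
Observe that a point $\bar x\in K$ lies in $\operatorname{CFP}(f,K)$ if and only if $\bar x\in\bigcap_{x\in K}F(x)$, so it suffices to prove that this intersection is nonempty.

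First I would verify that $F$ is a KKM map, that is, for every finite collection $x_1,\dots,x_n\in K$ one has
\[
\operatorname{co}(\{x_1,\dots,x_n\})\subset\bigcup_{i=1}^n F(x_i).
\]
This is precisely the content of proper quasimonotonicity: given any $x\in\operatorname{co}(\{x_1,\dots,x_n\})$, there exists $i\in\{1,\dots,n\}$ with $f(x_i,x)\leq 0$, which means $x\in F(x_i)$.

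Next I would check the topological hypotheses of the KKM--Fan lemma in $(X,\mathrm{weak})$. Each $F(x)$ is weakly closed by assumption, and since $F(x)\subset K$ with $K$ weakly compact, each $F(x)$ is also weakly compact. The weak topology on the Banach space $X$ is Hausdorff and linear, so the KKM--Fan lemma applies and yields
\[
\bigcap_{x\in K}F(x)\neq\emptyset.
\]
Any $\bar x$ in this intersection satisfies $f(x,\bar x)\leq 0$ for every $x\in K$, so $\bar x\in\operatorname{CFP}(f,K)$, concluding the proof.

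The only mildly delicate point is the application of the KKM--Fan lemma in the weak topology rather than in the original norm topology; once one notices that properness, weak closedness and weak compactness are all stated directly with respect to the weak topology, everything fits together and there is no real obstacle. Alternatively, as noted in the statement, one may simply invoke \cite[Proposition 2.1]{ACI} to shortcut the KKM argument, since the hypotheses here are exactly the ones required there.
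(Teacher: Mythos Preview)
Your proof is correct and is precisely the standard KKM--Fan argument underlying the cited result; the paper itself does not give a proof but simply defers to \cite[Proposition~2.1]{ACI}, which you also mention as an alternative. So your approach coincides with the paper's, only with the details of the KKM step spelled out.
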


Since every quasiconvex and lower semicontinuous function is lower semicontinuous in the weakly topology, the application of Proposition \ref{ACI1} gives us the following result.

\begin{cor}\label{CFP}
Let $K$ a weakly compact subset of $X$ and $f\in {\overline{\mathcal{Q}}(K)}$. If $f_{\overline{q}}$ is properly quasimonotone then $\operatorname{CFP}(f_{\overline{q}},K)$
is nonempty. Moreover, if $f_{\overline{q}}$ has the upper sign property then $\operatorname{EP}(f,K)$ is nonempty.
\end{cor}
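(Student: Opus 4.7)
The plan is to apply Proposition \ref{ACI1} directly to the regularization $f_{\overline{q}}$ and then transfer the conclusion back to $f$ via the equivalence of equilibrium problems for $f$ and $f_{\overline{q}}$ established in Remark \ref{regu-semi-equivalente}, using the upper sign property to bridge \eqref{cfp} and \eqref{EP}.

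For the first claim, I would verify that $f_{\overline{q}}$ meets both hypotheses of Proposition \ref{ACI1}. The proper quasimonotonicity is assumed. For the weak closedness of the set $L_x=\{y\in K:\,f_{\overline{q}}(x,y)\leq 0\}$, note that by definition of $\overline{\mathcal{Q}}(K)$ the function $f_{\overline{q}}(x,\cdot)$ is real-valued, quasiconvex and lower semicontinuous. Hence its lower level set $S_0(f_{\overline{q}}(x,\cdot))$ is convex and strongly closed in $X$; by Mazur's theorem it is weakly closed, and so is its intersection $L_x$ with the weakly closed set $K$. The hypotheses of Proposition \ref{ACI1} are therefore satisfied, which yields $\operatorname{CFP}(f_{\overline{q}},K)\neq\emptyset$.

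For the second claim, assume in addition that $f_{\overline{q}}$ has the upper sign property. Proposition \ref{caste-ACI}\emph{(ii)} applied to the bifunction $f_{\overline{q}}$ gives $\operatorname{CFP}(f_{\overline{q}},K)\subset \operatorname{EP}(f_{\overline{q}},K)$, so $\operatorname{EP}(f_{\overline{q}},K)$ is nonempty. Since $f\in\overline{\mathcal{Q}}(K)$, Remark \ref{regu-semi-equivalente} (or directly the inclusion \eqref{inclusion-EP}) yields $\operatorname{EP}(f_{\overline{q}},K)\subset \operatorname{EP}(f,K)$, and the nonemptiness of $\operatorname{EP}(f,K)$ follows. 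The argument involves no real obstacle; the one subtle point worth double-checking is precisely the passage from strong to weak closedness of the level sets, which is the reason the quasiconvex \emph{and} lower semicontinuous regularization $f_{\overline{q}}$ is the right object to feed into Proposition \ref{ACI1}, rather than, say, $f_q$ alone.
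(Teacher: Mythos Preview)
Your proposal is correct and follows exactly the same route as the paper: apply Proposition~\ref{ACI1} to $f_{\overline{q}}$ (the paper justifies the weak closedness of the level sets in the sentence preceding the corollary by noting that a quasiconvex lsc function is weakly lsc, which is equivalent to your Mazur argument), then invoke Proposition~\ref{caste-ACI}\emph{(ii)} and finally Remark~\ref{regu-semi-equivalente}. One minor caveat: the chain \eqref{inclusion-EP} is stated in the paper under the stronger hypothesis $f\in\overline{\mathcal{C}}(K)$, so Remark~\ref{regu-semi-equivalente} (or Lemma~\ref{g-f} directly) is the cleaner reference for the inclusion $\operatorname{EP}(f_{\overline{q}},K)\subset\operatorname{EP}(f,K)$ here.
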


\begin{proof}
Clearly, by Proposition \ref{ACI1}, $\operatorname{CFP}(f_{\overline{q}},K)$ is a nonempty set. Since $f_{\overline{q}}$ has the upper sign property, 
Proposition \ref{caste-ACI} \emph{(ii)} implies that $\operatorname{EP}(f_{\overline{q}},K)\neq\emptyset$. The result follows from 
{Remark  \ref{regu-semi-equivalente}}. 
\end{proof}

Another consequence of Proposition \ref{ACI1} is the following one.

\begin{cor}\label{R2}
Let $K$ be a nonempty compact convex subset of  $X$ and $f\in\mathcal{S}(K)$ such that
$f_s$ is properly quasimonotone and it has the upper sign property.
Then $\operatorname{EP}(f,K)$ is nonempty.
\end{cor}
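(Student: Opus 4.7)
The plan is to string together three earlier results: Proposition \ref{ACI1} to produce a solution of (CFP) for $f_s$, Proposition \ref{caste-ACI}(ii) to promote it to a solution of (EP) for $f_s$, and the second bullet of Remark \ref{regu-semi-equivalente} to transfer it back to $f$. All three ingredients are in place; the only real work is checking that the hypotheses of Proposition \ref{ACI1} are satisfied by $f_s$ on $K$.

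First I would note that because $f\in\mathcal{S}(K)$ the regularization $f_s$ is well defined and, by construction, $f_s(x,\cdot)$ is lower semicontinuous for every $x\in K$. Proper quasimonotonicity of $f_s$ is assumed. Since $K$ is norm-compact in the Banach space $X$, and the weak topology is Hausdorff and coarser than the norm topology, $K$ is weakly compact. It remains to show that for each $x\in K$ the sub-level set $L_x=\{y\in K:f_s(x,y)\leq 0\}$ is weakly closed. Lower semicontinuity of $f_s(x,\cdot)$ gives that $L_x$ is norm-closed in $K$; as $K$ is norm-compact, $L_x$ is itself norm-compact; and a norm-compact set in $X$ is weakly closed (because it is weakly compact in a Hausdorff topology). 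Hence the hypotheses of Proposition \ref{ACI1} are met and $\operatorname{CFP}(f_s,K)\neq\emptyset$.

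Next, the standing assumption that $f_s$ has the upper sign property together with Proposition \ref{caste-ACI}(ii) yields $\operatorname{CFP}(f_s,K)\subset \operatorname{EP}(f_s,K)$, so $\operatorname{EP}(f_s,K)\neq\emptyset$. Finally, because $f\in\mathcal{S}(K)$, the second bullet of Remark \ref{regu-semi-equivalente} gives $\operatorname{EP}(f_s,K)=\operatorname{EP}(f,K)$, which concludes the argument.

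The only conceptually delicate point is the verification that the sub-level sets $L_x$ are weakly closed; everything else is essentially quoting the toolbox already assembled in Sections 3 and 4. In fact this is also the place where the hypothesis that $K$ is compact (rather than merely weakly compact, as in Corollary \ref{CFP}) is genuinely used, since for the mere lsc regularization $f_s$ we do not have quasiconvexity at our disposal and so cannot appeal directly to weak lower semicontinuity as in the proof of Corollary \ref{CFP}.
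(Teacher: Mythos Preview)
Your argument is correct and follows essentially the same route as the paper. The paper's proof invokes Proposition~\ref{ACI1} and then Proposition~\ref{extension-ACI}\,(ii); the latter is itself obtained from Proposition~\ref{caste-ACI}\,(ii) together with Remark~\ref{regu-semi-equivalente}, so you have simply unpacked that step, and in addition you spell out the verification that the sublevel sets of $f_s(x,\cdot)$ are weakly closed (via norm-compactness of $K$), which the paper leaves implicit.
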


\begin{proof}
By Proposition \ref{ACI1} we have that $\operatorname{CFP}(f_s,K)$ is nonempty. The result follows from Proposition \ref{extension-ACI} \emph{(ii)}. 
\end{proof}
For each $n\in\N$, let $K_n=\{x\in K:~\|x\|\leq n\}$ and $K_n^\circ=\{x\in K:~\|x\|<n\}$.

\begin{proposition}\label{inclusion-EP2}
Suppose that for every $x,y_1,y_2\in K$, the following implication holds:
\begin{equation}\label{alfa}
[~f(x,y_1)\leq 0 \mbox{ and } f(x,y_2)<0~] \Rightarrow~ f(x,y_t)<0,~\forall t\in]0,1[,
\end{equation}
where $y_t=ty_1+(1-t)y_2$. If for some $n\in\N$ and some $x\in \operatorname{EP}(f,K_n)$ there exists $y\in K_n^\circ$ such that $f(x,y)\leq0$, then
$x\in \operatorname{EP}(f,K)$.
\end{proposition}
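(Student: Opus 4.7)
The plan is to argue by contradiction. Assume $x \in \operatorname{EP}(f,K_n)$ and that there exists $y \in K_n^\circ$ with $f(x,y) \leq 0$, but that $x \notin \operatorname{EP}(f,K)$. Then one can pick $z \in K$ with $f(x,z) < 0$. Since $x$ solves the equilibrium problem on $K_n$, this $z$ must lie outside $K_n$, so $\|z\| > n$; on the other hand, $\|y\| < n$ by hypothesis.

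The next step is to apply the implication (\ref{alfa}) to the pair $y_1 = y$ and $y_2 = z$. The inequality $f(x,y) \leq 0$ is given (and in fact $f(x,y) = 0$ because $y \in K_n$ and $x$ solves \eqref{EP} on $K_n$), while $f(x,z) < 0$ was just chosen. Hence (\ref{alfa}) yields $f(x,y_t) < 0$ for every $t \in\,]0,1[$, where $y_t = ty + (1-t)z$.

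The final step is to produce a point of $K_n$ on this segment, which will contradict $x \in \operatorname{EP}(f,K_n)$. Since the norm is continuous and $\|y_0\| = \|z\| > n > \|y\| = \|y_1\|$, the intermediate value theorem gives some $t^* \in\, ]0,1[$ with $\|y_{t^*}\| = n$. Because $K$ is convex and $y,z \in K$, the point $y_{t^*}$ belongs to $K$, and its norm being exactly $n$ places it in $K_n$. But then $f(x,y_{t^*}) < 0$ with $y_{t^*} \in K_n$ contradicts $x \in \operatorname{EP}(f,K_n)$, completing the proof.

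I do not expect a real obstacle here; the only delicate item is confirming that the hypothesis (\ref{alfa}) can really be applied even when $f(x,y) = 0$ rather than strictly negative, but the statement of (\ref{alfa}) is phrased exactly so that one inequality is weak and the other strict, which is what we have. The whole argument is essentially a boundary-crossing trick: the strict-negativity hypothesis (\ref{alfa}) propagates $f(x,\cdot) < 0$ from the exterior point $z$ all along the segment back toward $y$, and the segment must cross the sphere of radius $n$ at some interior parameter.
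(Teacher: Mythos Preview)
Your proof is correct and follows essentially the same route as the paper's: argue by contradiction, pick an exterior point $z$ with $f(x,z)<0$, apply condition~(\ref{alfa}) along the segment from $y$ to $z$, and observe that this segment meets $K_n$ at some interior parameter, contradicting $x\in\operatorname{EP}(f,K_n)$. The only cosmetic difference is that the paper simply notes that $y_t\in K_n$ for $t$ close enough to $1$ (since $y\in K_n^\circ$), whereas you invoke the intermediate value theorem to hit the sphere $\|y_{t^*}\|=n$ exactly; either variant works.
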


\begin{proof}
Let $x\in \operatorname{EP}(f,K_n)$ and $w\in K\setminus K_n$, if $f(x,w)<0$ then by (\ref{alfa})
$f(x,y_t)<0$ for all $t\in]0,1[$, where $y_t=ty+(1-t)w$. On the other hand,
since $y\in K_n^\circ$ there exists $t_0\in]0,1[$ such that $y_{t_0}\in K_n$,
which is a contradiction.
\end{proof}
{\begin{remark}
Condition (\ref{alfa}) is a technical assumption introduced by Farajzadeh and Zafarani in \cite{Fa-Za}  in
order to show the inclusion of $\operatorname{CFP}_{\operatorname{local}}(f,K)$ in $\operatorname{EP}(f,K)$. Clearly, the 
semistrict quasiconvexity of $f(x,\cdot)$
guarantees the condition (\ref{alfa}). So, in the Proposition \ref{inclusion-EP2} we can change
condition \eqref{alfa} by $f\in\mathcal{SQ}(K)$ and use the Remark \ref{regu-semi-equivalente} to
guarantee the nonemptiness of $\operatorname{EP}(f,K)$.
\end{remark}}


{As a direct consequence of previous result we have the following corollary.}

\begin{cor}\label{inclu-3}
Suppose (\ref{alfa}) holds  and $f$ has the upper sign property {(or $f\in\mathcal{SQ}(K)$ and $f_q$ has the upper sign property)}.
If for some $n\in\N$ and some $x\in \operatorname{CFP}(f,K_n)$ there exists $y\in K_n^\circ$ such that $f(x,y)\leq0$, then
$x\in \operatorname{EP}(f,K)$.  
\end{cor}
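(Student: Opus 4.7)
The plan is to chain two earlier results: first move from the convex feasibility solution on $K_n$ to an equilibrium solution on $K_n$ using the upper sign property (Proposition \ref{caste-ACI} (ii)), and then extend from $K_n$ to the full set $K$ using Proposition \ref{inclusion-EP2}, which is precisely designed to do this under assumption (\ref{alfa}).

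In detail, under the first alternative (so $f$ has the upper sign property), I would argue that $x\in \operatorname{CFP}(f,K_n)$ combined with the fact that the upper sign property of $f$ on $K$ restricts to the upper sign property on the convex subset $K_n$ yields, by Proposition \ref{caste-ACI} (ii) applied to the bifunction $f$ restricted to $K_n\times K_n$, that $x\in \operatorname{EP}(f,K_n)$. Together with the hypothesis that $f(x,y)\leq 0$ for some $y\in K_n^\circ$, Proposition \ref{inclusion-EP2} (which uses (\ref{alfa})) then upgrades this to $x\in \operatorname{EP}(f,K)$, which is what we want.

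Under the second alternative ($f\in\mathcal{SQ}(K)$ and $f_q$ has the upper sign property), I would pivot through $f_q$. Since $f_q\leq f$, Lemma \ref{g-f} gives $\operatorname{CFP}(f,K_n)\subset \operatorname{CFP}(f_q,K_n)$, so $x\in \operatorname{CFP}(f_q,K_n)$. Applying Proposition \ref{caste-ACI} (ii) to $f_q$ on $K_n$ yields $x\in \operatorname{EP}(f_q,K_n)$. Because $f\in\mathcal{SQ}(K)$, the function $f_q(x,\cdot)$ is semistrictly quasiconvex, so (\ref{alfa}) automatically holds for $f_q$ (as noted in the remark immediately preceding the corollary). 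Moreover $f_q(x,y)\leq f(x,y)\leq 0$ at the given $y\in K_n^\circ$, so Proposition \ref{inclusion-EP2} applied to $f_q$ gives $x\in \operatorname{EP}(f_q,K)$. Finally, since $f\in\mathcal{SQ}(K)\subset \mathcal{Q}(K)$, Remark \ref{regu-semi-equivalente} gives $\operatorname{EP}(f_q,K)=\operatorname{EP}(f,K)$, so $x\in \operatorname{EP}(f,K)$.

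No step here looks technically hard; the only thing one needs to be careful about is that in the second alternative the condition (\ref{alfa}) is to be verified for $f_q$ rather than for $f$ itself, which is precisely why the hypothesis $f\in\mathcal{SQ}(K)$ is invoked, ensuring semistrict quasiconvexity of $f_q(x,\cdot)$ (and hence (\ref{alfa})), and why the passage $\operatorname{EP}(f_q,K)=\operatorname{EP}(f,K)$ through the regularization equivalence of Remark \ref{regu-semi-equivalente} is essential.
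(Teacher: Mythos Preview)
Your proposal is correct and follows exactly the paper's route: the paper's proof is the single sentence ``Is a direct consequence of Proposition \ref{inclusion-EP2} and Proposition \ref{caste-ACI} \emph{(ii)}'', which is precisely your chain $\operatorname{CFP}(f,K_n)\to\operatorname{EP}(f,K_n)\to\operatorname{EP}(f,K)$ under the first alternative. Your handling of the parenthetical second alternative (pivoting through $f_q$, invoking semistrict quasiconvexity for (\ref{alfa}) and closing with Remark \ref{regu-semi-equivalente}) spells out in detail what the paper leaves implicit via the remark immediately preceding the corollary.
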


\begin{proof}
Is a direct consequence of Proposition \ref{inclusion-EP2} and Proposition \ref{caste-ACI} \emph{(ii)}.
\end{proof}




\begin{remark}
{The Corollary \ref{inclu-3} is an extension} of \cite[Lemma 4.1]{IKS06}.
\end{remark}

The following \emph{coercivity conditions} were studied in \cite{IS03,IKS09} and \cite{castellani2012}:

\begin{enumerate}[label=(C\arabic*), ref=(C\arabic*)]
\item \label{condicion1} For every sequence $\{x_n\}\subset K\setminus\{0\}$ satisfying
 $\displaystyle\lim_{n\to+\infty}\|x_n\|=\infty$, there exists  $u\in K$ and $n_0\in\N$ such that
  $f(x_n,u)\leq0$ for all $n\geq n_0$.
\item \label{condicion2} For every sequence $\{x_n\}\subset K\setminus\{0\}$ satisfying
 $\displaystyle\lim_{n\to+\infty}\|x_n\|=\infty$, there exists $n_0\in\N$ and $u_{n_0}\in K$  such that
 $\|u_{n_0}\|<\|x_{n_0}\|$ and  $f(x_{n_0},u_{n_0})\leq0$. 
\item \label{condicion3} For every sequence $\{x_n\}\subset K\setminus\{0\}$ such that
  $\displaystyle\lim_{n\to\infty}\|x_n\|=\infty$ and such that the sequence $\{\|x_n\|^{-1}x_n\}$ converges weakly to a point $x\in X$ 
  such that $y+x\in K$ and $f(y,x+y)\leq0$ for all $y\in K$,
 there exists another sequence $\{u_n\}\subset K$ such that, for $n$ large enough, $\|u_n\|<\|x_n\|$ and $f(x_n,u_n)\leq0$.
\end{enumerate}

It is not difficult to verify that \ref{condicion1} implies \ref{condicion2}, which in turn implies \ref{condicion3}. \\
 \newline
Clearly, if $f\in{\overline{\mathcal{C}}(K)}$ then (\ref{desi1}) implies that $f_i$ satisfies the coercivity conditions \ref{condicion1} or \ref{condicion2}
for all $i\in\{s,c,\overline{c},\overline{q},q\}$ provided that $f$ satisfies the same condition too.\\
\newline
{We define the following subfamily of $\mathcal{Q}(K)$:
\[
\overline{ \mathcal{SQ}}(K)=\{f\in \overline{\mathcal{Q}}(K):~ f_{\overline{q}}(x,\cdot)\mbox{ is semistrictly quasiconvex for all }x\in K\}
\]
Clearly, $\overline{\mathcal{C}}(K)\subset  \overline{\mathcal{SQ}}(K)\subset  \overline{\mathcal{Q}}(K)$.}\\
\newline
The following result extends the sufficient part of \cite[Theorem 4.4 (i)]{IKS09}, and 
also \cite[Theorem 5]{castellani2012}
with $\mu=0$.

\begin{proposition}\label{C3-existence}
 Suppose $X$ is a reflexive Banach space and $K$ is closed convex. If $f\in {\overline{\mathcal{SQ}}(K)}$ is such that
$f_{\overline{q}}$ is quasimonotone, it has the upper sign property on $K$ and it satisfies the  coercivity condition \ref{condicion3}, then $\operatorname{EP}(f,K)$ is nonempty.
\end{proposition}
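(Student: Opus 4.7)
The plan is to combine the equivalence between $\operatorname{EP}(f,K)$ and $\operatorname{EP}(f_{\overline{q}},K)$ established in Section~4 with a truncation-and-coercivity argument, exploiting the favorable structural properties of the regularized slices $f_{\overline{q}}(x,\cdot)$ (quasiconvex, lsc, and semistrictly quasiconvex).

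First, since $f\in\overline{\mathcal{SQ}}(K)\subset\overline{\mathcal{Q}}(K)$, Remark~\ref{regu-semi-equivalente} gives $\operatorname{EP}(f,K)=\operatorname{EP}(f_{\overline{q}},K)$, and by Proposition~\ref{caste-ACI}(ii) (using the upper sign property of $f_{\overline{q}}$) it suffices to produce $x^*\in\operatorname{CFP}(f_{\overline{q}},K)$. For each $n\in\N$ with $K_n=\{x\in K:\|x\|\leq n\}\neq\emptyset$, $K_n$ is closed, convex, and bounded, hence weakly compact by reflexivity of $X$. Since $f_{\overline{q}}(x,\cdot)$ is quasiconvex and lsc, its sublevel sets are closed convex, hence weakly closed. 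On $K_n$ I want to apply Proposition~\ref{ACI1}; the technical point is that this requires proper quasimonotonicity on $K_n$, which I extract from the combination of quasimonotonicity, the upper sign property, and the semistrict quasiconvexity of $f_{\overline{q}}(x,\cdot)$. This yields $x_n\in\operatorname{CFP}(f_{\overline{q}},K_n)$, and Proposition~\ref{caste-ACI}(ii), applied on $K_n$, upgrades it to $x_n\in\operatorname{EP}(f_{\overline{q}},K_n)$.

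Passing from $K_n$ to $K$ splits into two cases. \emph{Case A:} if $\|x_n\|<n$ for some $n$, then $x_n\in K_n^\circ$ and $f_{\overline{q}}(x_n,x_n)\leq 0$ (diagonal values of a quasimonotone bifunction are nonpositive), so Proposition~\ref{inclusion-EP2} applied with $y=x_n$ (condition~\eqref{alfa} being granted by the semistrict quasiconvexity of $f_{\overline{q}}(x_n,\cdot)$) yields $x_n\in\operatorname{EP}(f_{\overline{q}},K)$. \emph{Case B:} otherwise $\|x_n\|=n\to\infty$, and after extracting a subsequence with $\|x_n\|^{-1}x_n\rightharpoonup\overline{x}$, one uses the EP property of $x_n$ on $K_n$, the quasimonotonicity of $f_{\overline{q}}$, the weak lower semicontinuity of $f_{\overline{q}}(y,\cdot)$, and the closedness and convexity of $K$ to verify that $y+\overline{x}\in K$ and $f_{\overline{q}}(y,y+\overline{x})\leq 0$ for every $y\in K$. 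Condition~\ref{condicion3} then supplies $u_n\in K$ with $\|u_n\|<\|x_n\|\leq n$ and $f_{\overline{q}}(x_n,u_n)\leq 0$; combined with $f_{\overline{q}}(x_n,u_n)\geq 0$ (since $u_n\in K_n$ and $x_n$ is EP on $K_n$), this forces equality, and Proposition~\ref{inclusion-EP2} applied with $y=u_n\in K_n^\circ$ concludes $x_n\in\operatorname{EP}(f_{\overline{q}},K)$.

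The main obstacle is Case~B: verifying that the weak limit $\overline{x}$ of the normalized sequence satisfies both $y+\overline{x}\in K$ and $f_{\overline{q}}(y,y+\overline{x})\leq 0$ for every $y\in K$, the precise trigger for~\ref{condicion3}. This is the technical heart of the coercivity argument and rests on a careful interplay between the asymptotic cone structure of the closed convex set $K$, the quasimonotonicity of $f_{\overline{q}}$, and the weak lower semicontinuity of its quasiconvex regularized slices. A secondary delicate point is the upgrade from quasimonotonicity to proper quasimonotonicity of $f_{\overline{q}}$ on $K_n$ required to invoke Proposition~\ref{ACI1}.
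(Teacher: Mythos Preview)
Your overall architecture matches the paper's: reduce to $f_{\overline q}$, obtain $x_n\in\operatorname{CFP}(f_{\overline q},K_n)$ on the weakly compact truncations, split into the interior case $\|x_n\|<n$ and the boundary case $\|x_n\|=n$, and in the latter verify the premise of \ref{condicion3} by passing to a weak limit of $x_n/n$. The one substantive divergence is how you get proper quasimonotonicity.

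You write that you ``extract'' proper quasimonotonicity of $f_{\overline q}$ on $K_n$ from quasimonotonicity, the upper sign property, and semistrict quasiconvexity. This step does not go through: those hypotheses do \emph{not} force proper quasimonotonicity, and no such implication is available in the paper or in the literature it cites. The paper handles exactly this obstruction by a dichotomy rather than an extraction. If $f_{\overline q}$ happens to be properly quasimonotone, then Proposition~\ref{ACI1}/Corollary~\ref{CFP} applies on each $K_n$ and your Cases~A/B proceed. If it is \emph{not} properly quasimonotone, the paper invokes \cite[Theorem~3 and Corollary~1]{castellani2012}: for a quasimonotone bifunction with semistrictly quasiconvex second argument and the upper sign property, failure of proper quasimonotonicity already produces a point in $\operatorname{EP}(f_{\overline q},K)$ directly (the witnesses of the failure furnish the solution). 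So the correct fix is to replace your ``extraction'' by this two-branch argument; without it your appeal to Proposition~\ref{ACI1} is unjustified.

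A smaller point on Case~B: the paper does not rely on ``EP on $K_n$ plus quasimonotonicity'' to get $f_{\overline q}(y,x_n)\le 0$ (quasimonotonicity from a non-strict inequality gives nothing); it uses the CFP inequality $f_{\overline q}(y,x_n)\le 0$ directly, then sets $z_n=\tfrac1n x_n+\bigl(1-\tfrac1n\bigr)y\in K$, obtains $f_{\overline q}(y,z_n)\le 0$ by quasiconvexity of $f_{\overline q}(y,\cdot)$ together with $f_{\overline q}(y,y)\le 0$, and passes to the weak limit $z_n\rightharpoonup y+\overline x$ using weak lower semicontinuity. Your sketch gestures at this via the ``asymptotic cone'' remark, but the explicit convex-combination device $z_n$ is what makes both $y+\overline x\in K$ and $f_{\overline q}(y,y+\overline x)\le 0$ fall out simultaneously.
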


\begin{proof}
If $f_{\overline{q}}$ is not properly quasimonotone, then by {\cite[Theorem 3 and Corollary 1]{castellani2012}} $\operatorname{EP}(f_{\overline{q}},K)$
is nonempty and the result follows from Remark \ref{regu-semi-equivalente}. Now, suppose that $f_{\overline{q}}$ is properly quasimonotone.
Since $K_n$ is a weakly compact set, Corollary \ref{CFP} implies that $\operatorname{CFP}(f_{\overline{q}},K_n)$ is nonempty. 
If there exists $x_n\in \operatorname{CFP}(f_{\overline{q}},K_n)$ such that $\|x_n\|<n$ then Corollary \ref{inclu-3} implies that $x_n\in \operatorname{EP}(f,K)$. Thus, we may assume that $\|x_n\|=n$ for all $n\in\N$. Since the unit ball of $X$ is
weakly compact, without loss of generality we may assume that $ \{x_n/n\}$ converges weakly to some $x\in X$. Fix $y\in K$ and $m>\|y\|$.
For $n\geq m$, $y\in K_n$. Since $x_n\in \operatorname{CFP}(f_{\overline{q}},K_n)$ we have that  
\[
 f_{\overline{q}}(y,x_n)\leq0.
\]
Let $z_n= (1/n)x_n+(1-1/n)y\in K_n$. Then 
\[
 f_{\overline{q}}(y,z_n)\leq0
\]
Clearly, $\{z_n\}$ converges weakly to $x+y\in K$. Hence, the lower semicontinuity of $f_{\overline{q}}(y,\cdot)$ implies that
\[
 f_{\overline{q}}(y,x+y)\leq0.
\]
Therefore, coercivity condition \ref{condicion3} implies that there exists a sequence $\{u_n\}\subset K$ such that $\|u_n\|<\|x_n\|$ and $f(x_n,u_n)\leq0$.
From Corollary \ref{inclu-3} we have that $\operatorname{EP}(f_{\overline{q}},K)$ is nonempty. The result follows from Remark \ref{regu-semi-equivalente}.
\end{proof}

\begin{proposition}\label{C2-existence}
 Suppose $X$ is a finite dimensional space and $K$ is closed convex subset of $X$. If $f\in \mathcal{SQ}(K)$ is
 such that ${f_{q}}(\cdot,y)$ is upper semicontinuous for all $y\in K$,  ${f_{q}}(x,x)=0$ for all $x\in K$, 
 and ${f_{q}}$ satisfies the  coercivity condition \ref{condicion2}, then $\operatorname{EP}(f,K)$ is nonempty.
\end{proposition}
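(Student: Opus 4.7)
The plan is to imitate, at the level of $f_q$, the standard unbounded-domain argument: get an equilibrium on each bounded slice $K_n$ via Ky Fan's theorem, escape to the full $K$ either through an interior solution or through the coercivity condition \ref{condicion2}, and finally transfer the result from $f_q$ back to $f$ via Remark \ref{regu-semi-equivalente}.

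First, for each $n\in\N$ large enough that $K_n\neq\emptyset$, observe that $K_n$ is a compact convex subset of the finite dimensional space $X$. The hypotheses that $f_q(\cdot,y)$ is upper semicontinuous on $K$ and that $f_q(x,x)=0$ for every $x\in K$ are inherited on $K_n$, so Corollary \ref{R1} applied to $K_n$ supplies a point $x_n\in \operatorname{EP}(f,K_n)=\operatorname{EP}(f_q,K_n)$, where the equality comes from Remark \ref{regu-semi-equivalente}.

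Next, I split into two cases according to the norm $\|x_n\|$. If $\|x_n\|<n$ for some $n$, then $x_n\in K_n^\circ$ and, taking $y:=x_n$, one has $f_q(x_n,y)=f_q(x_n,x_n)=0\le 0$. Since $f\in\mathcal{SQ}(K)$, the function $f_q(x,\cdot)$ is semistrictly quasiconvex for every $x\in K$, which immediately provides condition \eqref{alfa} for $f_q$; hence Proposition \ref{inclusion-EP2}, applied with $f_q$ playing the role of $f$, yields $x_n\in \operatorname{EP}(f_q,K)$. Otherwise $\|x_n\|=n$ for every $n$, so $\{x_n\}\subset K\setminus\{0\}$ with $\|x_n\|\to\infty$; coercivity condition \ref{condicion2} applied to $f_q$ produces $n_0\in\N$ and $u_{n_0}\in K$ with $\|u_{n_0}\|<n_0$ (so $u_{n_0}\in K_{n_0}^\circ$) and $f_q(x_{n_0},u_{n_0})\le 0$, and a second application of Proposition \ref{inclusion-EP2} again gives $x_{n_0}\in \operatorname{EP}(f_q,K)$.

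In either case $\operatorname{EP}(f_q,K)\ne\emptyset$, and Remark \ref{regu-semi-equivalente} then yields $\operatorname{EP}(f,K)\ne\emptyset$. The delicate point to keep track of is that neither condition \eqref{alfa} nor the equality $f(x,x)=0$ is available for $f$ itself; every step works because $f_q$ is well-behaved (semistrictly quasiconvex in the second variable, with $f_q(x,x)=0$), and the bridge between $f$ and $f_q$ is supplied at the very end by Remark \ref{regu-semi-equivalente}.
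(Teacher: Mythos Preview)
Your argument is correct and follows exactly the paper's route: apply Corollary \ref{R1} on each $K_n$ to get $x_n\in\operatorname{EP}(f_q,K_n)$, then invoke Proposition \ref{inclusion-EP2} for $f_q$ (with $y=x_n$ in the interior case, or with the point furnished by \ref{condicion2} otherwise) to reach $\operatorname{EP}(f_q,K)$, and finish with Remark \ref{regu-semi-equivalente}. One small caveat: the equality $\operatorname{EP}(f,K_n)=\operatorname{EP}(f_q,K_n)$ is not what Remark \ref{regu-semi-equivalente} gives, since the quasiconvex regularization of $f|_{K_n\times K_n}$ need not be $f_q|_{K_n\times K_n}$; but this is harmless, because the rest of your proof only uses $x_n\in\operatorname{EP}(f_q,K_n)$, which is precisely what Corollary \ref{R1} applied to the already-quasiconvex bifunction $f_q$ on $K_n$ delivers.
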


\begin{proof}
Since $K_n$ is a compact set, then Corollary \ref{R1} implies that $\operatorname{EP}({f_{q}},K_n)$ is nonempty.  
If there exists $n\in\N$ such that $\|x_n\|<n$, then {Proposition \ref{inclusion-EP2}} with $x=y=x_n$, implies that
$x_n\in \operatorname{EP}({f_q},K)$ {and the result follows from Remark \ref{regu-semi-equivalente}}. 
If $\|x_n\|= n$ for all $n\in\N$, 
 condition \ref{condicion2} implies that there exists $n_0\in\N$ and $u\in K$ such that $u\in K_{n_0}^\circ$ and ${f_q}(x_{n_0},u)\leq0$. 
Using {Proposition \ref{inclusion-EP2}}  with $x=x_{n_0}$ and $y=u$, we have that $x_{n_0}\in \operatorname{EP}({f_q},K)$
{and the result follows again from Remark \ref{regu-semi-equivalente}}.
\end{proof}


\begin{thebibliography}{99}

\bibitem{ACI} D. Aussel, J. Cotrina and A. Iusem {\em An existence result for quasi-equilibrium problems}, 
J. Convex Analysis. accepted for publication.
%
\bibitem{AH04}
D. Aussel  and N. Hadjisavvas, {\em On quasimonotone variational inequalities}, 
J. Optim. Theory Appl. 121 (2004), 445-450.
%
\bibitem{BP96}
M. Bianchi and S. Schaible, {\em Generalized monotone bifunctions and equilibrium problems}, Journal of Optimization Theory
and Applications, Vol. 90, pp. 31--43, 1996.
%
\bibitem{BP01}
M. Bianchi and R. Pini, {\em A note on equilibrium problems with properly quasinomonote
bifunctions}, J. Global Optim. 20 (2001), 67-76.
%
\bibitem{BP05}
M. Bianchi and R. Pini, {\em Coercivity Conditions for Equilibrium Problems},
J. Optim. Theory Appl. 124 (2005), 79-92.
%
\bibitem{OB93}
Blum, E., Oettli, W., {\em From optimization and variational inequalities to equilibrium problems},
Math, Stud. 63, 1--23 (1993).
%
\bibitem{Brezis} H. Brezis, {\em Analyse Fonctionelle: Th\'eorie et Applications}, Masson, Paris, 1983.
%
\bibitem{castellani2010}
M. Castellani and M. Giuli, {\em On Equivalent Equilibrium problem}, J. Optim. Theory Appl. 147 (2010),
157-168.
%
\bibitem{castellani2012}
M. Castellani and M. Giuli, {\em Refinements of existence results for relaxed
quasimonotone equilibrium problem},  J. Global Optim. 57 (2013), 1213-1227.
%
\bibitem{castellani_Giuli15}
M. Castellani and M. Giuli, {\em An existence result for quasiequilibrium problems 
in separable Banach spaces},  J. Math. Anal. Appl. 425 (2015), 85-95.
%
\bibitem{JPC} J.P. Crouzeix {\em Generalized Convexity and Generalized monotonicity}, Monograf\'ias del IMCA N$^\circ$ 17, 2000.
%
\bibitem{KKM}
B. Knaster, C. Kuratowski, S. Mazurkiewicz, Ein Beweies des Fixpunk-tsatzes f\"{u}r $N$ Dimensionale Simplexe, Fundamenta Mathematicae
14 (1929) 132-137.
%
\bibitem{Kfan2} K. Fan,
{\em A generalization of Tychonoff's fixed point theorem}, Mathematische Annalen 142 (1961) 305-310.
%
\bibitem{Kfan} K. Fan, 
{\em A minimax inequality and applications}, in O. Shisha (Ed.), inequality III, Academic Press, New York, 1972, pp. 103-113.
%
\bibitem{Fa-Za} A.P. Farajzadeh and J. Zafarani {\em Equilibrium problem and variational inequalities in topological vector space},
Optimization Vol. 59, No. 4, May 2010, 485-499.
%
\bibitem{FB00} F. Flores-Baz\'an, {\em Existence theorems for generalized 
noncoercive equilibrium problems: quasiconvex case},
SIAM J. Optim. 11 (2000), 675-790.
%
\bibitem{FB03} F. Flores-Baz\'an,  {\em Existence theory for finite dimensional 
pseudomonotone equilibrium problems},
Acta Appl. Math. 77 (2003), 249-297.
%
\bibitem{H03} N. Hadjisavvas, {\em Continuity and Maximality Properties of Pseudomonotone Operators},
J. Convex Anal. 10 (2003), 459-469. 
\bibitem{IKS06}
A. Iusem, G. Kassay, and W. Sosa, 
{\em On certain conditions for the existence of solutions of equilibrium problems},
Math. Program. 116 (2009), 259-273.
%
\bibitem{IKS09}
A. Iusem, G. Kassay, and  W. Sosa, {\em An existence result for equilibrium 
with some surjectivity consequences}, 
J. Convex Anal. 16 (2009) 807-826.
%
\bibitem{IS03} A. Iusem and W. Sosa,
{\em New existence results for equilibrium problems}, Nonlinear Anal. 52 (2003), 621-635.
%
\bibitem{SN10}
M. Nasri and W. Sosa,
{\em Equilibrium problems and generalized Nash games},
Optimization, Vol. 60, Nos. 8--9, August-September 2011, 1161--1170.

\end{thebibliography}
\end{document}